\newtheorem*{property1}{Property N}
\newtheorem*{property2}{Property AU}
\newtheorem*{property3}{Property M}
\newtheorem{thm}{Theorem}
\newtheorem{prop}[thm]{Proposition}
\theoremstyle{definition}
\theoremstyle{definition}
\newcommand{\K}{\mathbb{K}}
\newcommand{\eme}{\mathcal{m}}
\newcommand{\F}{\mathbb{F}}
\author{Francisco Franco Munoz}
\date{}							
\title{\bf The existence of non-thin subalgebras of $\K[x]/x^n$ and related numerical monoids}
\begin{document}

\newcommand{\Addresses}{{
  \bigskip
  \footnotesize

  Francisco Franco Munoz, \textsc{Department of Mathematics and Statistics, Smith College,\\
    Northampton, MA 01060}\par\nopagebreak
  \textit{E-mail address}: \texttt{fmunoz@smith.edu}
}}

\maketitle

\abstract{We discuss examples of subalgebras of $\K[x]/x^n$, count them in the case of finite fields when $n\leq 10$, and emphasize the connections with monoids and their invariants. We prove that the least integer such that there is a ``non-thin" subalgebra of $\K[x]/x^n$ is $n=14$.}

\section{Setting and conventions}

In this work, the basic objects of study are $\K$-subalgebras of $\K[x]/x^n$, where $\K$ is a field. Such an algebra $R$ is local as a ring; we denote by $\eme= \eme_R$ its maximal ideal, $E=E(R)$ its set of exponents (i.e. the set of valuations of its monic elements), and $d(E)$ the minimal number of generators of $E$ as partial-monoid. We drop the subscripts and references to $R$ when understood from the context. We continue using here the same notation as in \cite{Franco-min} and we'll refer to that paper for background and results. All subspaces and dimensions are over $\K$.

\section{Summary of minimal extensions}

The relevant results of \cite{Franco-min} are applied in this setting as follows: Let $\varphi: \K[x]/x^{n+1}\to \K[x]/x^{n}$ the natural map. We'll consider subalgebras $R\subset \K[x]/x^{n}$. The main question we study is what can be said about the set of subalgebras mapping isomorphically onto $R$ by $\varphi$.

\begin{thm} The necessary and sufficient condition for the existence of a subalgebra $S\subset \K[x]/x^{n+1}$ mapping isomorphically onto $R$ is that $Ker(\varphi)$ (a one-dimensional $\K$-vector space) not be contained in $\eme_{\varphi^{-1}(R)}^2$. If that's the case, the set of such algebras is naturally in correspondence with an affine space, isomorphic to (i.e. same dimension as) $\eme_{R}/\eme_{R}^2$.
\end{thm}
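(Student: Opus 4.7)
My plan is to reformulate the question in terms of $\K$-linear splittings. Let $T := \varphi^{-1}(R) \subset \K[x]/x^{n+1}$, which is a $\K$-subalgebra containing $\ker(\varphi) = \K x^n$ as an ideal and fits into a short exact sequence $0 \to \K x^n \to T \to R \to 0$. A subalgebra $S \subset \K[x]/x^{n+1}$ mapping isomorphically onto $R$ is precisely the image of a multiplicative unital $\K$-linear section $\sigma: R \to T$ (with $S = \sigma(R)$ and $\sigma = (\varphi|_S)^{-1}$), and this assignment is a bijection. A key identity used repeatedly is that $\eme_T \cdot \K x^n = 0$, because $x \cdot x^n = 0$ in $\K[x]/x^{n+1}$; in particular $\K x^n$ is a square-zero ideal annihilated by $\eme_T$.

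For necessity, assume such $S$ exists. Since $\varphi|_S$ is injective, $S \cap \K x^n = 0$, and a dimension count (both $\eme_S$ and $\K x^n$ sit inside $\eme_T$, and $\dim \eme_T = \dim \eme_S + 1$) yields $\eme_T = \eme_S \oplus \K x^n$ as $\K$-vector spaces. Combined with $\eme_T \cdot \K x^n = 0$, this forces $\eme_T^2 = \eme_S^2 \subset S$, so $\eme_T^2 \cap \K x^n \subset S \cap \K x^n = 0$. Hence $\K x^n \not\subset \eme_T^2$.

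For sufficiency, I would fix any $\K$-linear section $\sigma_0$ with $\sigma_0(1)=1$ (which exists since $R = \K\cdot 1 \oplus \eme_R$ and any basis of $\eme_R$ can be lifted to $T$), and parameterize a general section as $\sigma = \sigma_0 + f$ with $f: R \to \K x^n$ linear, $f(1)=0$. Setting
\[ c(r, r') := \sigma_0(r)\sigma_0(r') - \sigma_0(rr') \in \K x^n \qquad (r, r' \in \eme_R), \]
the cross terms $\sigma_0(r) f(r')$, $f(r)\sigma_0(r')$, $f(r) f(r')$ all vanish thanks to $\eme_T \cdot \K x^n = 0$, so $\sigma$ is multiplicative iff $f(rr') = c(r, r')$ for every $r, r' \in \eme_R$.

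The heart of the argument, and where the hypothesis enters crucially, is showing that the prescription $f(rr') := c(r,r')$ factors through the multiplication map, i.e.\ defines an unambiguous $\K$-linear map $\eme_R^2 \to \K x^n$. If $\sum_i r_i r'_i = 0$ in $\eme_R$, then
\[ \sum_i c(r_i, r'_i) = \sum_i \sigma_0(r_i)\sigma_0(r'_i) \in \eme_T^2 \cap \K x^n, \]
and since $\K x^n$ is one-dimensional, this intersection is $0$ precisely when $\K x^n \not\subset \eme_T^2$. Once $f$ is well-defined on $\eme_R^2$, any $\K$-linear extension to $\eme_R$ produces a multiplicative section, and distinct extensions give distinct sections and hence distinct subalgebras $S$. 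The set of extensions is a torsor over $\mathrm{Hom}_\K(\eme_R/\eme_R^2,\, \K x^n)$, which as a $\K$-vector space has dimension $\dim_\K \eme_R/\eme_R^2$, yielding the stated affine-space structure.
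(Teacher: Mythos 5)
Your argument is correct and complete. Note that the paper itself offers no proof of this theorem in the text --- it is imported wholesale from the companion paper on minimal extensions (\cite{Franco-min}) --- so what you have written is a self-contained derivation of exactly the specialization being used. Your route is the standard one for a square-zero extension $0 \to \K x^n \to T \to R \to 0$ with $\eme_T \cdot \K x^n = 0$: subalgebras $S$ mapping isomorphically onto $R$ correspond to multiplicative unital sections, the obstruction to well-definedness of the corrected section lives in $\eme_T^2 \cap \K x^n$, which vanishes iff $\K x^n \not\subset \eme_T^2$ by one-dimensionality, and the resulting set of sections is a torsor over $\mathrm{Hom}_\K(\eme_R/\eme_R^2, \K x^n)$, giving the affine space of dimension $\dim \eme_R/\eme_R^2$. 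All the small implicit checks go through: a linear section with $\sigma_0(1)=1$ automatically carries $\eme_R$ into $\eme_T$ (constant terms are preserved by $\varphi$), so the cross terms and $f(r)f(r')$ really do vanish, and distinct multiplicative sections give distinct subalgebras because a subalgebra mapped isomorphically admits a unique section, namely $(\varphi|_S)^{-1}$.
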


In particular when $\K=\F_q$, there are $q^{d(R)}$ such subalgebras, where $d(R)=\dim(\eme_R/\eme_R^2)$. The process described in \cite{Franco-min} allows one to reduce to those subalgebras of $\K[x]/x^n$ containing $x^{n-1}$. Furthermore, as it's explained there, provided one knows that for all subalgebras of $\K[x]/x^{n}$ the equality $\dim(\eme_R/\eme_R^2)= d(E)$ holds, then one can count exactly all the subalgebras of $\K[x]/x^{n+1}$. 
\vspace{2 mm}

We recall here Proposition 22 of \cite{Franco-min} which says that $\dim(\eme_R/\eme_R^2)\leq d(E)$ and that's the most one can assert. An algebra for which equality holds will be called {\bf{thin}}, otherwise it's called {\bf{non-thin}}. Here's an example of a non-thin subalgebra \cite{Franco-min}:
\vspace{2 mm}

The algebra generated by $\{1, a=x^6+x^9, b=x^7, c=x^8\}$ inside $\K[x]/x^{18}$ has the basis $\{1, x^6+x^9, x^7, x^8, x^{12}, x^{13}, x^{14}, x^{15}, x^{16}, x^{17}\}$, and $E=\{0, 6, 7, 8, 12, 13, 14, 15, 16, 17\}$ and the generators of $E$ are $\{6, 7, 8, 17\}$ so $17$ is a generator of $E$ while $x^{17} = ac-b^2$ which belongs to $\eme^2$. We have also $\dim(\eme_R/\eme_R^2)=3 < 4=d(E)$. 

\section{Examples and tables}

\subsection{Examples in low dimensions}

Fix a field $\K$. We can list all the sub-partial-monoids $E$ of $[0, n-1]=\{0, 1, \cdots, n-1\}$ for low values of $n$ and compute $e(E)$ directly. Recall the definition of the invariant $e$:

  \[ e_{n}(E) = \begin{cases}
              0 &\text{if~} n=1 \\
               e_{n-1}(E\setminus \{n-1\})     & \text{if~} n-1\in E, n > 1\\
               d(E)+e_{n-1}(E)     & \text{if~} n-1\notin E, n > 1\\
            \end{cases} \]

 In the tables, the generators denote algebra generators, i.e. all the monomials in the elements (including the empty monomial $1$). It's not a priori immediate if those are linearly independent modulo $\eme^2$, but in the following lists that's the case and at the end we'll prove this theorem. 

\subsubsection{$n=1$}

\begin{center}
$\begin{array}{ | c | c | c |  } 
\hline
E & e(E) & \text{Subalgebras} \\ 
\hline
\{0\} & 0 & \K[x]/x=\K \\
\hline
\end{array}$
\end{center}

\subsubsection{$n=2$}

\begin{center}
$\begin{array}{ | c | c | c |  } 
\hline
E & e(E) & \text{Subalgebras} \\ 
\hline
\{0, 1\} & 0 & \K[x]/x^2 \\
\hline
\{0\} & 0 & \K \\
\hline
\end{array}$
\end{center}

\subsubsection{$n=3$}

\begin{center}
$\begin{array}{ | c | c | c |  } 
\hline
E & e(E) & \text{Subalgebras} \\ 
\hline
\{0, 1, 2\} & 0 & \K[x]/x^3 \\
\hline
\{0, 2\} & 0 & \{x^2\} \\
\hline
\{0\} & 0 & \K \\
\hline
\end{array}$
\end{center}

\subsubsection{$n=4$}

\begin{center}
$\begin{array}{ | c | c | c |  } 
\hline
E & e(E) & \text{Subalgebras} \\ 
\hline
\{0, 1, 2, 3\} & 0 & \K[x]/x^4 \\
\hline
\{0, 2, 3\} & 0 & \{x^2, x^3\} \\
\hline
\{0, 3\} & 0 & \{x^3\} \\
\hline
\{0, 2\} & 1 & \{x^2+ax^3\} \mid \text{any } a\in \K \\
\hline
\{0\} & 0 & \K\\
\hline
\end{array}$
\end{center}

\subsubsection{$n=5$}

\begin{center}
$\begin{array}{ | c | c | c |  }
\hline
E & e(E) & \text{Subalgebras} \\ 
\hline
\{0, 1, 2, 3, 4\} & 0 & \K[x]/x^5 \\
\hline
\{0, 2, 3, 4\} & 0 & \{x^2, x^3\} \\
\hline
\{0, 3, 4\} & 0 & \{x^3, x^4\} \\
\hline
\{0, 2, 4\} & 1 & \{x^2+ax^3\} \mid \text{any } a\in \K \\
\hline
\{0, 4\} & 0 & \{x^4\} \\
\hline
\{0, 3\} & 1 & \{x^3+ax^4\} \mid \text{any } a\in \K \\
\hline
\{0\} & 0 & \K \\
\hline
\end{array}$
\end{center}

\subsubsection{$n=6$}

\begin{center}
$\begin{array}{ | c | c | c |  }
\hline
E & e(E) & \text{Subalgebras} \\ 
\hline
\{0, 1, 2, 3, 4, 5\} & 0 & \K[x]/x^6 \\
\hline
\{0, 2, 3, 4, 5\} & 0 & \{x^2, x^3\} \\
\hline
\{0, 3, 4, 5\} & 0 & \{x^3, x^4, x^5\} \\
\hline
\{0, 2, 4, 5\} & 1 & \{x^2+ax^3, x^5\} \mid \text{any } a\in \K \\
\hline
\{0, 4, 5\} & 0 & \{x^4, x^5\} \\
\hline
\{0, 3, 5\} & 1  &  \{x^3+ax^4, x^5\} \mid \text{any } a\in \K \\
\hline
\{0, 5\} & 0 & \{x^5\} \\
\hline 
\{0, 3, 4\} & 2 & \{x^3+ax^5, x^4+bx^5\}  \mid \text{any } a, b\in \K \\
\hline
\{0, 2, 4\} & 2 & \{x^2+ax^3+bx^5\}  \mid \text{any } a, b\in \K \\
\hline
\{0, 4\} & 1 & \{x^4+ax^5\} \mid \text{any } a\in \K \\
\hline
\{0, 3\} & 2 & \{x^3+ax^4+bx^5\} \mid \text{any } a, b\in \K \\
\hline
\{0\} & 0 & \K \\
\hline
\end{array}$
\end{center}

\subsubsection{$n=7$}

\begin{center}
$\begin{array}{ | c | c | c |  }
\hline
E & e(E) & \text{Subalgebras} \\ 
\hline
\{0, 1, 2, 3, 4, 5, 6\} & 0 & \K[x]/x^7 \\
\hline
\{0, 2, 3, 4, 5, 6\} & 0 & \{x^2, x^3\} \\
\hline
\{0, 3, 4, 5, 6\} & 0 & \{x^3, x^4, x^5\} \\
\hline
\{0, 2, 4, 5, 6\} & 1 & \{x^2+ax^3, x^5\} \mid \text{any } a\in \K \\
\hline
\{0, 4, 5, 6\} & 0 & \{x^4, x^5, x^6\} \\
\hline
\{0, 3, 5, 6\} & 1  &  \{x^3+ax^4, x^5\} \mid \text{any } a\in \K \\
\hline
\{0, 5, 6\} & 0 & \{x^5, x^6\} \\
\hline
\{0, 3, 4, 6\} & 2 & \{x^3+ax^5, x^4+bx^5\}  \mid \text{any } a, b\in \K \\
\hline
\{0, 2, 4, 6\} & 2 & \{x^2+ax^3+bx^5\}  \mid \text{any } a, b\in \K \\
\hline
\{0, 4, 6\} & 1 & \{x^4+ax^5, x^6\} \mid \text{any } a\in \K \\
\hline
\{0, 3, 6\} & 2 & \{x^3+ax^4+bx^5\} \mid \text{any } a, b\in \K \\
\hline
\{0, 6\} & 0 & \{x^6\} \\
\hline 
\{0, 4, 5\} & 2 & \{x^4+ax^6, x^5+bx^6\} \mid \text{any } a, b\in \K \\
\hline
\{0, 5\} & 1 & \{x^5+ax^6\} \mid \text{any } a\in \K\\
\hline
\{0, 4\} & 2 & \{x^4+ax^5+bx^6\} \mid \text{any } a, b\in \K \\
\hline
\{0\} & 0 & \K \\
\hline
\end{array}$
\end{center}

\subsubsection{$n=8$}

\begin{center}
$\begin{array}{ | c | c | c |  }
\hline
E & e(E) & \text{Subalgebras} \\ 
\hline
\{0, 1, 2, 3, 4, 5, 6, 7\} & 0 & \K[x]/x^8 \\
\hline
\{0, 2, 3, 4, 5, 6, 7\} & 0 & \{x^2, x^3\} \\
\hline
\{0, 3, 4, 5, 6, 7\} & 0 & \{x^3, x^4, x^5\} \\
\hline
\{0, 2, 4, 5, 6, 7\} & 1 & \{x^2+ax^3, x^5\} \mid \text{any } a\in \K \\
\hline
\{0, 4, 5, 6, 7\} & 0 & \{x^4, x^5, x^6, x^7\} \\
\hline
\{0, 3, 5, 6, 7\} & 1  &  \{x^3+ax^4, x^5, x^7\} \mid \text{any } a\in \K \\
\hline
\{0, 5, 6, 7\} & 0 & \{x^5, x^6, x^7\} \\
\hline
\{0, 3, 4, 6, 7\} & 2 & \{x^3+ax^5, x^4+bx^5\}  \mid \text{any } a, b\in \K \\
\hline
\{0, 2, 4, 6, 7\} & 2 & \{x^2+ax^3+bx^5, x^7\}  \mid \text{any } a, b\in \K \\
\hline
\{0, 4, 6, 7\} & 1 & \{x^4+ax^5, x^6, x^7\} \mid \text{any } a\in \K \\
\hline
\{0, 3, 6, 7\} & 2 & \{x^3+ax^4+bx^5, x^7\} \mid \text{any } a, b\in \K \\
\hline
\{0, 6, 7\} & 0 & \{x^6, x^7\} \\
\hline
\{0, 4, 5, 7\} & 2 & \{x^4+ax^6, x^5+bx^6, x^7\} \mid \text{any } a, b\in \K \\
\hline
\{0, 5, 7\} & 1 & \{x^5+ax^6, x^7\}  \mid \text{any } a\in \K \\
\hline
\{0, 4, 7\} & 2 & \{x^4+ax^5+bx^6, x^7\} \mid \text{any } a, b\in \K \\
\hline
\{0, 7\} & 0 & \{x^7\} \\
\hline 
\{0, 4, 5, 6\} & 3 & \{x^4+ax^7, x^5+bx^7, x^6+cx^7\} \mid \text{any } a, b, c\in \K \\
\hline
\{0, 3, 5, 6\} & 3  &  \{x^3+ax^4+bx^7, x^5+cx^7\}  \mid \text{any } a, b, c\in \K \\
\hline
\{0, 5, 6\} & 2 & \{x^5+ax^7, x^6+bx^7\} \mid \text{any } a, b\in \K\\
\hline
\{0, 2, 4, 6\} & 3 & \{x^2+ax^3+bx^5+cx^7\}  \mid \text{any } a, b, c\in \K \\
\hline
\{0, 4, 6\} & 3 & \{x^4+ax^5+bx^7, x^6+cx^7\} \mid \text{any } a, b, c\in \K \\
\hline
\{0, 3, 6\} & 3 & \{x^3+ax^4+bx^5+cx^7\} \mid \text{any } a, b, c\in \K \\
\hline
\{0, 6\} & 1 & \{x^6+ax^7\} \mid \text{any } a\in \K \\
\hline
\{0, 4, 5\} & 4 & \{x^4+ax^6+cx^7, x^5+bx^6+dx^7\} \mid \text{any } a, b, c, d\in \K \\
\hline
\{0, 5\} & 2 & \{x^5+ax^6+bx^7\} \mid \text{any } a, b\in \K  \\
\hline
\{0, 4\} & 3 & \{x^4+ax^5+bx^6+cx^7\} \mid \text{any } a, b, c\in \K \\
\hline
\{0\} & 0 & \K \\
\hline
\end{array}$
\end{center}


\subsubsection{$n=9$}

\begin{center}
$\begin{array}{ | c | c | c |  }
\hline
E & e(E) & \text{Subalgebras} \\ 
\hline
\{0, 1, 2, 3, 4, 5, 6, 7, 8\} & 0 & \K[x]/x^9 \\
\hline
\{0, 2, 3, 4, 5, 6, 7, 8\} & 0 & \{x^2, x^3\} \\
\hline
\{0, 3, 4, 5, 6, 7, 8\} & 0 & \{x^3, x^4, x^5\} \\
\hline
\{0, 2, 4, 5, 6, 7, 8\} & 1 & \{x^2+ax^3, x^5\} \mid \text{any } a\in \K \\
\hline
\{0, 4, 5, 6, 7, 8\} & 0 & \{x^4, x^5, x^6, x^7\} \\
\hline
\{0, 3, 5, 6, 7, 8\} & 1  &  \{x^3+ax^4, x^5, x^7\} \mid \text{any } a\in \K  \\
\hline
\{0, 5, 6, 7, 8\} & 0 & \{x^5, x^6, x^7, x^8\} \\
\hline
\{0, 3, 4, 6, 7, 8\} & 2 & \{x^3+ax^5, x^4+bx^5\}  \mid \text{any } a, b\in \K \\
\hline
\{0, 2, 4, 6, 7, 8\} & 2 & \{x^2+ax^3+bx^5, x^7\}  \mid \text{any } a, b\in \K \\
\hline
\{0, 4, 6, 7, 8\} & 1 & \{x^4+ax^5, x^6, x^7\} \mid \text{any } a\in \K \\
\hline
\{0, 3, 6, 7, 8\} & 2 & \{x^3+ax^4+bx^5, x^7, x^8\} \mid \text{any } a, b\in \K \\
\hline
\{0, 6, 7, 8\} & 0 & \{x^6, x^7, x^8\} \\
\hline
\{0, 4, 5, 7, 8\} & 2 & \{x^4+ax^6, x^5+bx^6, x^7\} \mid \text{any } a, b\in \K \\
\hline
\{0, 5, 7, 8\} & 1 & \{x^5+ax^6, x^7, x^8\}  \mid \text{any } a\in \K \\
\hline
\{0, 4, 7, 8\} & 2 & \{x^4+ax^5+bx^6, x^7\} \mid \text{any } a, b\in \K \\
\hline
\{0, 7, 8\} & 0 & \{x^7, x^8\} \\
\hline 
\{0, 4, 5, 6, 8\} & 3 & \{x^4+ax^7, x^5+bx^7, x^6+cx^7\}  \mid \text{any } a, b, c\in \K \\
\hline
\{0, 3, 5, 6, 8\} & 3  &  \{x^3+ax^4+bx^7, x^5+cx^7\}  \mid \text{any } a, b, c\in \K \\
\hline
\{0, 5, 6, 8\} & 2 & \{x^5+ax^7, x^6+bx^7, x^8\}  \mid \text{any } a, b\in \K\\
\hline
\{0, 2, 4, 6, 8\} & 3 & \{x^2+ax^3+bx^5+cx^7\}  \mid \text{any } a, b, c\in \K \\
\hline
\{0, 4, 6, 8\} & 3 & \{x^4+ax^5+bx^7, x^6+cx^7\} \mid \text{any } a, b, c\in \K \\
\hline
\{0, 3, 6, 8\} & 3 & \{x^3+ax^4+bx^5+cx^7, x^8\} \mid \text{any } a, b, c\in \K \\
\hline
\{0, 6, 8\} & 1 & \{x^6+ax^7, x^8\} \mid \text{any } a\in \K \\
\hline
\{0, 4, 5, 8\} & 4 & \{x^4+ax^6+cx^7, x^5+bx^6+dx^7\} \mid \text{any } a, b, c, d\in \K \\
\hline
\{0, 5, 8\} & 2 & \{x^5+ax^6+bx^7, x^8\} \mid \text{any } a, b\in \K  \\
\hline
\{0, 4, 8\} & 3 & \{x^4+ax^5+bx^6+cx^7\} \mid \text{any } a, b, c\in \K \\
\hline
\{0, 8\} & 0 & \{x^8\} \\
\hline 
\{0, 5, 6, 7\} & 3 & \{x^5+ax^8, x^6+bx^8, x^7+cx^8\} \mid \text{any } a, b, c\in \K  \\
\hline
\{0, 3, 6, 7\} & 4 & \{x^3+ax^4+bx^5+cx^8, x^7+dx^8\} \mid \text{any } a, b, c, d\in \K  \\
\hline
\{0, 6, 7\} & 2 & \{x^6+ax^8, x^7+bx^8\} \mid \text{any } a, b\in \K \\
\hline
\{0, 5, 7\} & 3 & \{x^5+ax^6+bx^8, x^7+cx^8\}  \mid \text{any } a, b, c\in \K \\
\hline
\{0, 7\} & 1 & \{x^7+ax^8\} \mid \text{any } a\in \K \\
\hline
\{0, 5, 6\} & 4 & \{x^5+ax^7+cx^8, x^6+bx^7+dx^8\}  \mid \text{any } a, b, c, d\in \K \\
\hline
\{0, 3, 6\} & 4 & \{x^3+ax^4+bx^5+cx^7+dx^8\} \mid \text{any } a, b, c, d\in \K \\
\hline
\{0, 6\} & 2 & \{x^6+ax^7+bx^8\}  \mid \text{any } a, b\in \K \\
\hline
\{0, 5\} & 3 & \{x^5+ax^6+bx^7+cx^8\} \mid \text{any } a, b, c\in \K  \\
\hline
\{0\} & 0 & \K \\
\hline
\end{array}$
\end{center}


\newpage

\subsubsection{$n=10$}

\begin{center}
$\begin{array}{ | c | c | c |  }
\hline
E & e(E) & \text{Subalgebras} \\ 
\hline
\{0, 1, 2, 3, 4, 5, 6, 7, 8, 9\} & 0 & \K[x]/x^{10} \\
\hline
\{0, 2, 3, 4, 5, 6, 7, 8, 9\} & 0 & \{x^2, x^3\} \\
\hline
\{0, 3, 4, 5, 6, 7, 8, 9\} & 0 & \{x^3, x^4, x^5\} \\
\hline
\{0, 2, 4, 5, 6, 7, 8, 9\} & 1 & \{x^2+ax^3, x^5\} \mid \text{any } a\in \K \\
\hline
\{0, 4, 5, 6, 7, 8, 9\} & 0 & \{x^4, x^5, x^6, x^7\} \\
\hline
\{0, 3, 5, 6, 7, 8, 9\} & 1  &  \{x^3+ax^4, x^5, x^7\} \mid \text{any } a\in \K  \\
\hline
\{0, 5, 6, 7, 8, 9\} & 0 & \{x^5, x^6, x^7, x^8, x^9\} \\
\hline
\{0, 3, 4, 6, 7, 8, 9\} & 2 & \{x^3+ax^5, x^4+bx^5, x^9\}  \mid \text{any } a, b\in \K \\
\hline
\{0, 2, 4, 6, 7, 8, 9\} & 2 & \{x^2+ax^3+bx^5, x^7\}  \mid \text{any } a, b\in \K \\
\hline
\{0, 4, 6, 7, 8, 9\} & 1 & \{x^4+ax^5, x^6, x^7, x^9\} \mid \text{any } a\in \K \\
\hline
\{0, 3, 6, 7, 8, 9\} & 2 & \{x^3+ax^4+bx^5, x^7, x^8\} \mid \text{any } a, b\in \K \\
\hline
\{0, 6, 7, 8, 9\} & 0 & \{x^6, x^7, x^8, x^9\} \\
\hline
\{0, 4, 5, 7, 8, 9\} & 2 & \{x^4+ax^6, x^5+bx^6, x^7\} \mid \text{any } a, b\in \K \\
\hline
\{0, 5, 7, 8, 9\} & 1 & \{x^5+ax^6, x^7, x^8, x^9\}  \mid \text{any } a\in \K \\
\hline
\{0, 4, 7, 8, 9\} & 2 & \{x^4+ax^5+bx^6, x^7, x^9\} \mid \text{any } a, b\in \K  \\
\hline
\{0, 7, 8, 9\} & 0 & \{x^7, x^8, x^9\} \\
\hline 
\{0, 4, 5, 6, 8, 9\} & 3 & \{x^4+ax^7, x^5+bx^7, x^6+cx^7\}  \mid \text{any } a, b, c\in \K \\
\hline
\{0, 3, 5, 6, 8, 9\} & 3  &  \{x^3+ax^4+bx^7, x^5+cx^7\}  \mid \text{any } a, b, c\in \K \\
\hline
\{0, 5, 6, 8, 9\} & 2 & \{x^5+ax^7, x^6+bx^7, x^8, x^9\}  \mid \text{any } a, b\in \K\\
\hline
\{0, 2, 4, 6, 8, 9\} & 3 & \{x^2+ax^3+bx^5+cx^7, x^9\}  \mid \text{any } a, b, c\in \K \\
\hline
\{0, 4, 6, 8, 9\} & 3 & \{x^4+ax^5+bx^7, x^6+cx^7, x^9\} \mid \text{any } a, b, c\in \K  \\
\hline
\{0, 3, 6, 8, 9\} & 3 & \{x^3+ax^4+bx^5+cx^7, x^8\} \mid \text{any } a, b, c\in \K \\
\hline
\{0, 6, 8, 9\} & 1 & \{x^6+ax^7, x^8, x^9\} \mid \text{any } a\in \K \\
\hline
\{0, 4, 5, 8, 9\} & 4 & \{x^4+ax^6+cx^7, x^5+bx^6+dx^7\} \mid \text{any } a, b, c, d\in \K \\
\hline
\{0, 5, 8, 9\} & 2 & \{x^5+ax^6+bx^7, x^8, x^9\} \mid \text{any } a, b\in \K  \\
\hline
\{0, 4, 8, 9\} & 3 & \{x^4+ax^5+bx^6+cx^7, x^9\} \mid \text{any } a, b, c\in \K \\
\hline
\{0, 8, 9\} & 0 & \{x^8, x^9\} \\
\hline 
\{0, 5, 6, 7, 9\} & 3 & \{x^5+ax^8, x^6+bx^8, x^7+cx^8, x^9\} \mid \text{any } a, b, c\in \K  \\
\hline
\{0, 3, 6, 7, 9\} & 4 & \{x^3+ax^4+bx^5+cx^8, x^7+dx^8\} \mid \text{any } a, b, c, d\in \K  \\
\hline
\{0, 6, 7, 9\} & 2 & \{x^6+ax^8, x^7+bx^8, x^9\} \mid \text{any } a, b\in \K \\
\hline
\{0, 5, 7, 9\} & 3 & \{x^5+ax^6+bx^8, x^7+cx^8, x^9\}  \mid \text{any } a, b, c\in \K \\
\hline
\{0, 7, 9\} & 1 & \{x^7+ax^8, x^9\} \mid \text{any } a\in \K \\
\hline
\{0, 5, 6, 9\} & 4 & \{x^5+ax^7+cx^8, x^6+bx^7+dx^8, x^9\}  \mid \text{any } a, b, c, d\in \K \\
\hline
\{0, 3, 6, 9\} & 4 & \{x^3+ax^4+bx^5+cx^7+dx^8\} \mid \text{any } a, b, c, d\in \K \\
\hline
\{0, 6, 9\} & 2 & \{x^6+ax^7+bx^8, x^9\} \mid \text{any } a, b\in \K \mid \text{any } a, b\in \K \\
\hline
\{0, 5, 9\} & 3 & \{x^5+ax^6+bx^7+cx^8, x^9\} \mid \text{any } a, b, c\in \K  \\
\hline
\{0, 9\} & 0 & \{x^9\} \\
\hline
\end{array}$
\end{center}


\begin{center}
$\begin{array}{ | c | c | c |  }
\hline
\{0, 5, 6, 7, 8\} & 4 & \{x^5+ax^9, x^6+bx^9, x^7+cx^9, x^8+dx^9\} \mid \text{any } a, b, c, d\in \K  \\
\hline
\{0, 4, 6, 7, 8\} & 4 & \{x^4+ax^5+bx^9, x^6+cx^9, x^7+dx^9\} \mid \text{any } a, b, c, d\in \K \\
\hline
\{0, 6, 7, 8\} & 3 & \{x^6+ax^9, x^7+bx^9, x^8+cx^9\} \mid \text{any } a, b, c\in \K  \\
\hline
\{0, 5, 7, 8\} & 4 & \{x^5+ax^6+bx^9, x^7+cx^9, x^8+dx^9\}  \mid \text{any } a, b, c, d\in \K \\
\hline
\{0, 4, 7, 8\} & 4 & \{x^4+ax^5+bx^6+cx^9, x^7+dx^9\} \mid \text{any } a, b, c, d\in \K \\
\hline
\{0, 7, 8\} & 2 & \{x^7+ax^9, x^8+bx^9\} \mid \text{any } a, b\in \K \\
\hline 
\{0, 5, 6, 8\} & 5 & \{x^5+ax^7+cx^9, x^6+bx^7+dx^9, x^8+ex^9\}  \mid \text{any } a, b, c, d, e\in \K\\
\hline
\{0, 2, 4, 6, 8\} & 4 & \{x^2+ax^3+bx^5+cx^7+dx^9\}  \mid \text{any } a, b, c, d\in \K \\
\hline
\{0, 4, 6, 8\} & 5 & \{x^4+ax^5+bx^7+dx^9, x^6+cx^7+ex^9\} \mid \text{any } a, b, c, d, e\in \K \\
\hline
\{0, 6, 8\} & 3 & \{x^6+ax^7+bx^9, x^8+cx^9\} \mid \text{any } a, b, c\in \K \\
\hline
\{0, 5, 8\} & 4 & \{x^5+ax^6+bx^7+cx^9, x^8+dx^9\} \mid \text{any } a, b, c, d\in \K  \\
\hline
\{0, 4, 8\} & 4 & \{x^4+ax^5+bx^6+cx^7+dx^9\} \mid \text{any } a, b, c, d\in \K \\
\hline
\{0, 8\} & 1 & \{x^8+ax^9\}  \mid \text{any } a\in \K \\
\hline 
\{0, 5, 6, 7\} & 6 & \{x^5+ax^8+dx^9, x^6+bx^8+ex^9, x^7+cx^8+fx^9\} \mid \text{any } a, b, c, d, e, f\in \K \\
\hline
\{0, 6, 7\} & 4 & \{x^6+ax^8+cx^9, x^7+bx^8+dx^9\} \mid \text{any } a, b, c, d\in \K \\
\hline
\{0, 5, 7\} & 5 & \{x^5+ax^6+bx^8+dx^9, x^7+cx^8+ex^9\}  \mid \text{any } a, b, c, d, e\in \K \\
\hline
\{0, 7\} & 2 & \{x^7+ax^8+bx^9\} \mid \text{any } a, b\in \K \\
\hline
\{0, 5, 6\} & 6 & \{x^5+ax^7+cx^8+ex^9, x^6+bx^7+dx^8+fx^9\}  \mid \text{any } a, b, c, d, e, f\in \K \\
\hline
\{0, 6\} & 3 & \{x^6+ax^7+bx^8+cx^9\} \mid \text{any } a, b, c\in \K \\
\hline
\{0, 5\} & 4 & \{x^5+ax^6+bx^7+cx^8+dx^9\} \mid \text{any } a, b, c, d\in \K  \\
\hline
\{0\} & 0 & \K \\
\hline
\end{array}$
\end{center}

\vspace{3 mm}

\begin{thm} The tables are correct up to $n=10$. Moreover, the equality $\dim(\eme/\eme^2) = d(E)$ holds, i.e. these algebras are all thin.
\end{thm}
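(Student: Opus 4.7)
The plan is induction on $n$. The base case $n=1$ is immediate: the only subalgebra of $\K$ is $\K$ itself, with $E=\{0\}$, $d(E)=0$, and $\eme=0$, so thinness holds trivially. For the inductive step I would assume the table at level $n$ is correct and every listed subalgebra is thin, then produce the table at level $n+1$ via the natural map $\varphi\colon \K[x]/x^{n+1}\to \K[x]/x^n$. Every subalgebra $S\subset \K[x]/x^{n+1}$ falls into exactly one of two types: \emph{type (i)}, where $\varphi|_S$ is an isomorphism onto some $R\subset \K[x]/x^n$ (so $E(S)=E(R)$), or \emph{type (ii)}, where $x^n\in S$ and $S=\varphi^{-1}(R)$ for $R=\varphi(S)$ (so $E(S)=E(R)\cup\{n\}$).

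By Theorem 1 the type-(i) lifts of a given $R$ form an affine space of dimension $\dim(\eme_R/\eme_R^2)$, which the inductive hypothesis of thinness identifies with $d(E(R))$. This matches the $d(E)$ free parameters $a,b,c,\ldots$ appearing in each row of the $(n+1)$-table whose $E$ does not contain $n$, and reproduces the recursion $e_{n+1}(E)=e_n(E)+d(E)$. Type-(ii) subalgebras are in bijection with the $n$-table, so they populate the rows whose $E$ does contain $n$ and satisfy $e_{n+1}(E)=e_n(E\setminus\{n\})$. Together this reconstructs the $(n+1)$-table entry by entry.

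Thinness at level $n+1$ then splits in two. For a type-(i) lift $S$, the isomorphism $\varphi|_S\colon S\to R$ identifies $\eme_S/\eme_S^2$ with $\eme_R/\eme_R^2$ while preserving $E$, so thinness descends from $R$. For a type-(ii) extension $S=\varphi^{-1}(R)$ the exponent set gains $n$, and thinness becomes the assertion that $x^n\notin \eme_S^2$ whenever $n$ is a new partial-monoid generator of $E(R)\cup\{n\}$; this is precisely the non-vanishing condition of Theorem 1 governing whether type-(i) lifts of $R$ into $\K[x]/x^{n+1}$ exist, so confirming it also confirms the predicted count of lifts.

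The main obstacle is therefore the case-by-case verification that for each $R$ appearing in the tables up through $n=9$, the element $x^n$ cannot be written as a $\K$-linear combination of products of the listed generators of $R$ inside $\K[x]/x^{n+1}$. Since each row displays the generators as explicit polynomials with known leading exponents, this reduces to enumerating the few relevant products, reading off their leading terms, and checking that no combination has leading term $x^n$; a finite, purely mechanical calculation. The difficulty is therefore one of volume rather than substance, bookkeeping roughly two hundred entries across the nine inductive levels, with no conceptual obstruction encountered in this range.
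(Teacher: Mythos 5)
Your proposal is correct and follows essentially the same route as the paper: both run the induction through the minimal-extension theorem (your type-(i)/type-(ii) split is exactly the paper's reduction to subalgebras containing $x^{n-1}$), thereby reducing everything to the single verification that $x^{n-1}\notin\eme^2$ whenever $n-1$ is a generator of $E$, settled by finite inspection of the tables. Two small remarks: the claim in your last paragraph that ``$x^n$ cannot be written as a combination of products'' must be restricted to the rows with $n\notin E^{(2)}$ (for the other rows $x^n$ genuinely lies in $\eme_S^2$, which is harmless and in fact required), and the paper trims your ``roughly two hundred entries'' down to a handful per level by discarding every row in which $\eme^2=0$, in which $n-1\in E^{(2)}$, or in which $E^{(2)}$ is multiplicity-free so that no cancellation of leading terms can occur.
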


\begin{proof} Checking the values for $e(E)$ is straightforward. The fact that the coefficients appear in the indicated way is a consequence of the theory of minimal extensions (\cite{Franco-min}), as mentioned before. For $E\subseteq [0, n-1]$, denote $E^{(2)}=(E_{> 0} + E_{> 0})\cap [1, n-1]$, those that are sum of two nonzero elements (it's possible that $E^{(2)}=\emptyset$). Notice that $d(E)=\#(E_{> 0}\setminus E^{(2)})$.

We need to prove the assertion that if $n-1\in E$ and $n-1\notin E^{(2)}$ implies that $x^{n-1}\notin \eme^2$. This in turn proves that the counts for the sub-partial-monoids not containing $n-1$ are correct, since they come from the results on minimal extensions from those containing $n-1$.

By immediate inspection the results for $n\leq 6$ hold. For $n=7$, by direct inspection of the table, for all $E\subseteq [0, n-1]$ that contain $n-1=6$, either $n-1\notin E^{(2)}$ or the square $\eme^2=0$, i.e. $E$ has trivial additive structure, so $E^{(2)}=\emptyset$ (which means that for $x, y \in E_{>0}$, $x+y\geq n$).

\begin{itemize}
\item $n=8$. The ones that are not immediately obvious are:

\begin{center}
$\begin{array}{ | c | c |  }
\hline
\{0, 3, 5, 6, 7\}   &  \{x^3+ax^4, x^5, x^7\} \mid \text{any } a\in \K \\
\hline
\{0, 2, 4, 6, 7\}  & \{x^2+ax^3+bx^5, x^7\}  \mid \text{any } a, b\in \K  \\
\hline
\{0, 3, 6, 7\}  & \{x^3+ax^4+bx^5, x^7\} \mid \text{any } a, b\in \K \\
\hline
\end{array}$
\end{center}

It's enough to show $x^7\notin \eme^2$ for the first two, since the third is contained in the first. For $\{x^3+ax^4, x^5, x^7\} $ the only nonzero product in $\eme^2$ is $(x^3+ax^4)^2$ which is not a multiple of $x^7$. For $ \{x^2+ax^3+bx^5, x^7\} $ a linear generating set of $\eme^2$ consists of the powers $w^2, w^3$ (where $w=x^2+ax^3+bx^5$) and clearly no such combination equals $x^7$.

\item $n=9$. The ones that are not immediately obvious are:

\begin{center}
$\begin{array}{ | c | c |  }
\hline
\{0, 3, 6, 7, 8\}  & \{x^3+ax^4+bx^5, x^7, x^8\} \mid \text{any } a, b\in \K \\
\hline
\{0, 3, 6, 8\}  & \{x^3+ax^4+bx^5+cx^7, x^8\} \mid \text{any } a, b, c\in \K \\
\hline
\end{array}$
\end{center}

It's enough to show $x^8\notin \eme^2$ for the first exponent set, since that contains the second. For $ \{x^3+ax^4+bx^5, x^7, x^8\} $ a generating set of $\eme^2$ consists of the element $w^2$ (where $w=x^3+ax^4+bx^5$) which is not a multiple of $x^8$.

\item $n=10$. The ones that are not so obvious are:

\begin{center}
$\begin{array}{ | c | c |  }
\hline
\{0, 4, 6, 7, 8, 9\} & \{x^4+ax^5, x^6, x^7, x^9\} \mid \text{any } a\in \K \\
\hline
\{0, 4, 7, 8, 9\}  & \{x^4+ax^5+bx^6, x^7, x^9\} \mid \text{any } a, b\in \K  \\
\hline
\{0, 2, 4, 6, 8, 9\} & \{x^2+ax^3+bx^5+cx^7, x^9\}  \mid \text{any } a, b, c\in \K  \\
\hline
\{0, 4, 6, 8, 9\} & \{x^4+ax^5+bx^7, x^6+cx^7, x^9\} \mid \text{any } a, b, c\in \K   \\
\hline
\end{array}$
\end{center}

It's enough to show $x^9\notin \eme^2$ for the first and third exponent set.  For $\{x^4+ax^5, x^6, x^7, x^9\}$, the only nonzero product in $\eme^2$ is $(x^4+ax^5)^2$ which is not a multiple of $x^9$. For  $\{x^2+ax^3+bx^5+cx^7, x^9\}$, a linear generating set of $\eme^2$ consists of the powers $w^2, w^3, w^4$ where $w=x^2+ax^3+bx^5+cx^7$ and such a combination is never equal to $x^9$, by looking at leading coefficients. 

\end{itemize} 

\end{proof}

\section{Counting Monoids and Algebras}

We can collect the information about the monoids in the following tables. They follow by simply counting the items in the tables before. The top horizontal row labels the possible values of $e=e(E)$ for $E$ a sub-partial-monoid of $[0, n-1]$, and the vertical left row labels the co-size $c$ of $E$, namely the size of its complement $\#([0, n-1]\setminus E)$. We start the tables with the trivial case $n=1$. The last column is the total count of a given co-size.

Many patterns can be explained from results in \cite{Franco-min} while others have to do with Frobenius numbers of numerical monoids \cite{RS}, in particular with relations between the genus (number of elements of the complement) and the Frobenius number (largest number not contained in the monoid).

\subsection{Monoid tables}
 
\[
\begin{array}{|c|c|c|}
\hline
c\backslash e & 0 & \text{Total co-size} \\
\cline{1-3}
0& 1 & 1\\
\cline{1-3}
\end{array}
\qquad
\begin{array}{|c|c|c|}
\hline
c\backslash e & 0 & \text{Total co-size} \\
\cline{1-3}
0& 1 & 1\\
\cline{1-3}
1 &  1 & 1\\
\cline{1-3}
\end{array}
\qquad
\begin{array}{|c|c|c|}
\hline
c\backslash e & 0 & \text{Total co-size} \\
\cline{1-3}
0& 1 & 1\\
\cline{1-3}
1 &  1 & 1\\
\cline{1-3}
2 & 1 & 1\\
\cline{1-3}
\end{array}
\qquad
\begin{array}{|c|c|c|c|}
\hline
c\backslash e & 0 & 1 & \text{Total co-size} \\
\cline{1-4}
0& 1 & - & 1\\
\cline{1-4}
1 &  1 & - & 1\\
\cline{1-4}
2 & 1 & 1 & 2\\
\cline{1-4}
3 & 1 & - & 1\\
\cline{1-4}
\end{array}
\]

\[
\begin{array}{|c|c|c|c|}
\hline
c\backslash e & 0 & 1 & \text{Total co-size} \\
\cline{1-4}
0& 1 & - & 1\\
\cline{1-4}
1 &  1 & - & 1\\
\cline{1-4}
2 & 1 & 1 & 2\\
\cline{1-4}
3 & 1 & 1 & 2\\
\cline{1-4}
4 & 1 & - & 1\\
\cline{1-4}
\end{array}
\qquad
\begin{array}{|c|c|c|c|c|}
\hline
c\backslash e & 0 & 1 & 2 & \text{Total co-size} \\
\cline{1-5}
0& 1 & - & - & 1\\
\cline{1-5}
1 &  1 & - & - & 1\\
\cline{1-5}
2 & 1 & 1 & - & 2\\
\cline{1-5}
3 & 1 & 1 & 2 & 4\\
\cline{1-5}
4 & 1 & 1 & 1 & 3\\
\cline{1-5}
5 & 1 & - & - & 1\\
\cline{1-5}
\end{array}
\qquad
\begin{array}{|c|c|c|c|c|}
\hline
c\backslash e & 0 & 1 & 2 & \text{Total co-size} \\
\cline{1-5}
0& 1 & - & - & 1\\
\cline{1-5}
1 &  1 & - & - & 1\\
\cline{1-5}
2 & 1 & 1 & - & 2\\
\cline{1-5}
3 & 1 & 1 & 2 & 4\\
\cline{1-5}
4 & 1 & 1 & 2 & 4\\
\cline{1-5}
5 & 1 & 1 & 1 & 3\\
\cline{1-5}
6 & 1 & - & - & 1\\
\cline{1-5}
\end{array}
\]

\[
\begin{array}{|c|c|c|c|c|c|c|}
\hline
c\backslash e & 0 & 1 & 2 & 3 & 4 & \text{Total co-size} \\
\cline{1-7}
0 & 1 & - & - & - & - & 1\\
\cline{1-7}
1 & 1 & - & - & - & - & 1\\
\cline{1-7}
2 & 1 & 1 & - & - & - & 2\\
\cline{1-7}
3 & 1 & 1 & 2 & - & - & 4\\
\cline{1-7}
4 & 1 & 1 & 2 & 3 & - & 7\\
\cline{1-7}
5 & 1 & 1 & 2 & 2 & 1 & 7\\
\cline{1-7}
6 & 1 & 1 & 1 & 1 & - & 4\\
\cline{1-7}
7 & 1 & - & - & - & - & 1\\
\cline{1-7}
\end{array}
\qquad
\begin{array}{|c|c|c|c|c|c|c|}
\hline
c\backslash e & 0 & 1 & 2 & 3 & 4 & \text{Total co-size} \\
\cline{1-7}
0 & 1 & - & - & - & - & 1\\
\cline{1-7}
1 & 1 & - & - & - & - & 1\\
\cline{1-7}
2 & 1 & 1 & - & - & - & 2\\
\cline{1-7}
3 & 1 & 1 & 2 & - & - & 4\\
\cline{1-7}
4 & 1 & 1 & 2 & 3 & - & 7\\
\cline{1-7}
5 & 1 & 1 & 2 & 3 & 2 & 9\\
\cline{1-7}
6 & 1 & 1 & 2 & 2 & 2 & 8\\
\cline{1-7}
7 & 1 & 1 & 1 & 1 & - & 4\\
\cline{1-7}
8 & 1 & - & - & - & - & 1\\
\cline{1-7}
\end{array}
\]

\[
\begin{array}{|c|c|c|c|c|c|c|c|c|}
\hline
c\backslash e & 0 & 1 & 2 & 3 & 4 & 5 & 6 & \text{Total co-size} \\
\cline{1-9}
0 & 1 & - & - & - & - & - & - & 1\\
\cline{1-9}
1 & 1 & - & - & - & - & - & - & 1\\
\cline{1-9}
2 & 1 & 1 & - & - & - & - & - & 2\\
\cline{1-9}
3 & 1 & 1 & 2 & - & - &- & - & 4\\
\cline{1-9}
4 & 1 & 1 & 2 & 3 & - & - & - & 7\\
\cline{1-9}
5 & 1 & 1 & 2 & 3 & 5 & - & - & 12\\
\cline{1-9}
6 & 1 & 1 & 2 & 3 & 4 & 2 & 1 & 14\\
\cline{1-9}
7 & 1 & 1 & 2 & 2 & 3 & 1 & 1 & 11\\
\cline{1-9}
8 & 1 & 1 & 1 & 1 & 1 & - & - & 5\\
\cline{1-9}
9 & 1 & - & - & - & - & - & - & 1\\
\cline{1-9}
\end{array}
\]

\subsection{Subalgebras of $\F_q[x]/x^n$}

From the tables before we get the following counts. We omit the codimension $c=0$ row as it's trivial and also omit the dimension $n=1$ column since $\K[x]/x^1=\K$.  


\vspace{1 mm}


  \noindent\begin{tabular}{|c|c|c|c|c|c|c|c|c|c|c|c|}
    \hline
      \diagbox{c=codim}{n=dim} & $2$& $3$& $4$& $5$& $6$  \\
    \cline{1-6}
    $1$& $1$& $1$& $1$& $1$& $1$\\
    \cline{1-6}
    $2$&  $-$& $1$& $q+1$& $q+1$& $q+1$\\
    \cline{1-6}
    $3$ & $-$& $-$& $1$& $q+1$& $2q^2+q+1$ \\
    \cline{1-6}
    $4$ &$-$ & $-$ & $-$ & $1$ & $q^2+q+1$\\
    \cline{1-6}
    $5$& $-$ & $-$& $-$& $-$ & $1$ \\
    \cline{1-6}
    $6$ & $-$ & $-$ & $-$ & $-$ & $-$\\
   \cline{1-6}
    $7$ & $-$ & $-$ & $-$ & $-$ & $-$ \\
    \cline{1-6}
    $8$ & $-$ & $-$ & $-$ & $-$ & $-$ \\
  \cline{1-6}
    $9$ & $-$ & $-$ & $-$ & $-$ & $-$  \\
   \cline{1-6}
  \end{tabular}
    
\vspace{2 mm} 

      \noindent\begin{tabular}{|c|c|c|c|c|}
    \hline
      \diagbox{c}{n} & $7$ & $8$ & $9$ & $10$  \\
    \cline{1-5}
    $1$&  $1$& $1$ & $1$ & $1$\\
    \cline{1-5}
    $2$& $q+1$ & $q+1$ & $q+1$ & $q+1$\\
    \cline{1-5}
    $3$ & $2q^2+q+1$ & $2q^2+q+1$ & $2q^2+q+1$ & $2q^2+q+1$ \\
    \cline{1-5}
    $4$ & $2q^2+q+1$ & $3q^3+2q^2+q+1$ & $3q^3+2q^2+q+1$ & $3q^3+2q^2+q+1$\\
    \cline{1-5}
    $5$&  $q^2+q+1$ & $q^4+2q^3+2q^2+q+1$ & $2q^4+3q^3+2q^2+q+1$ & $5q^4+3q^3+2q^2+q+1$\\
    \cline{1-5}
    $6$ & $1$ & $q^3+q^2+q+1$ & $2q^4+2q^3+2q^2+q+1$ & $q^6+2q^5+4q^4+3q^3+2q^2+q+1$\\
   \cline{1-5}
    $7$ & $-$ & $1$ & $q^3+q^2+q+1$ & $q^6+q^5+3q^4+2q^3+2q^2+q+1$\\
    \cline{1-5}
    $8$ & $-$ & $-$ & $1$ & $q^4+q^3+q^2+q+1$\\
  \cline{1-5}
    $9$ & $-$ & $-$ & $-$ & $1$ \\
   \cline{1-5}
  \end{tabular}
    
    \newpage
   
\section{Beyond $n=10$}   

For the analysis that follows, recall that we have defined $E^{(2)} = (E_{>0}+E_{>0})\cap [1, n-1]$. In Theorem 2 we have used and proved several properties that we'll now formalize.

\begin{property1} $E$ is a sub-partial-monoid of $[0, n-1]$ such that $n-1\in E$ and $n-1\notin E^{(2)}$. 
\end{property1}

We notice the following:

\begin{prop} Property N is equivalent to $E\setminus \{n-1\}$ is a sub-partial-monoid of $[0, n-2]$.
\end{prop}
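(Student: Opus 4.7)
Set $F := E \setminus \{n-1\}$. The plan is a direct proof of both implications by unwinding the definition of "sub-partial-monoid" and the set $E^{(2)}$. The underlying observation is that $n-1 \in E^{(2)}$ is precisely the obstruction to removing $n-1$ from $E$ while preserving closure.

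For the forward direction, assume Property N. Then $F \subseteq [0, n-2]$ and $0 \in F$ are immediate (since $n-1$ is the element being removed and $0 \neq n-1$). To check closure of $F$, take $x, y \in F$ with $x+y$ in the relevant range. Since $E$ is a sub-partial-monoid of $[0, n-1]$, we have $x+y \in E$. The value $x+y$ cannot equal $n-1$: if $x = 0$ or $y = 0$, then $x+y \in \{x, y\} \subseteq F$ trivially, while if both $x, y \in E_{>0}$, then $x+y = n-1$ would place $n-1$ in $E^{(2)}$, contradicting Property N. Hence $x+y \leq n-2$ and $x+y \in E \setminus \{n-1\} = F$.

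For the reverse direction, assume $F$ is a sub-partial-monoid of $[0, n-2]$, with $n-1 \in E$ part of the setup (implicit in the meaningful removal of $n-1$). I would derive $n-1 \notin E^{(2)}$ by contradiction: suppose $n-1 = x+y$ for some $x, y \in E_{>0}$. Then $1 \leq x, y \leq n-2$ and $x, y \in E$, so $x, y \in F$. The closure of $F$ inherited from the ambient partial-monoid structure then forces the sum $x+y$ to lie in $F$; but $x+y = n-1 \notin F$, a contradiction. Hence no such decomposition exists, and Property N holds.

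Both directions are essentially verifications, and I do not anticipate a substantive obstacle. The one point requiring care is the reverse direction's appeal to $F$'s closure: one must interpret "sub-partial-monoid of $[0, n-2]$" so that sums of two elements of $F$ cannot land at $n-1$ (equivalently, the closure is inherited from the $[0, n-1]$-structure, with sums that would exceed $n-2$ but stay $\leq n-1$ also forbidden by the containment $F \subseteq [0, n-2]$). Under this natural reading -- the same one implicitly used in Theorem 2 -- the proposition is effectively a restatement of Property N in terms of the structure of $F$.
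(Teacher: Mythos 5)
The paper offers no argument for this proposition (it is introduced with ``We notice the following''), so your proof stands on its own; and you have correctly located the crux, but your resolution of it is a redefinition rather than a proof. Under the standard reading of ``sub-partial-monoid of $[0,n-2]$'' --- a subset $F\ni 0$ such that $a,b\in F$ and $a+b\le n-2$ imply $a+b\in F$ --- the step in your reverse direction that ``the closure of $F$ \dots forces the sum $x+y$ to lie in $F$'' is exactly what fails: the sum $x+y=n-1$ exceeds $n-2$, so the partial-monoid structure of $[0,n-2]$ imposes no condition on it. Concretely, take $n=10$ and $E=\{0,3,6,9\}$. Then $F=E\setminus\{9\}=\{0,3,6\}$ is a genuine sub-partial-monoid of $[0,8]$ (the only in-range sum is $3+3=6$), yet $9=3+6\in E^{(2)}$, so Property N fails. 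A matching warning sign appears in your forward direction: as you set it up, the hypothesis $n-1\notin E^{(2)}$ is never actually needed, since a sum that lands in $[0,n-2]$ cannot equal $n-1$ in the first place; deleting the top element of \emph{any} sub-partial-monoid of $[0,n-1]$ yields a sub-partial-monoid of $[0,n-2]$ in the standard sense. An equivalence whose forward half uses none of the hypothesis and whose reverse half is false is a sign the statement is being read with the wrong closure condition.

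Your final paragraph proposes to repair this by interpreting ``sub-partial-monoid of $[0,n-2]$'' so that sums of two positive elements of $F$ are forbidden from landing at $n-1$; under that reading your argument goes through, but the proposition then becomes essentially a restatement of $n-1\notin E^{(2)}$, as you yourself observe. The substantive content the paper is after is visible in the sentence immediately following the proposition: Property N should correspond to $(E\setminus\{n-1\})\cup[n,\infty)$ being a submonoid of $\N$ with Frobenius number $n-1$. That is the statement worth proving, and there the condition $n-1\notin E^{(2)}$ does real work: for $a,b\in (E\setminus\{n-1\})_{>0}$ the sum $a+b$ lies in $E\setminus\{n-1\}$ when $a+b\le n-2$ (closure of $E$), lies in $[n,\infty)$ when $a+b\ge n$, and the single excluded value $a+b=n-1$ is ruled out precisely by $n-1\notin E^{(2)}$; conversely, if the union is a submonoid omitting $n-1$, no such decomposition of $n-1$ can exist. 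I would recommend either proving the proposition in that form, or stating explicitly the strengthened closure condition you are attaching to ``sub-partial-monoid of $[0,n-2]$'' and acknowledging that the equivalence is then immediate.
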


By the theory of monoids, these correspond to submonoids of $\mathbb{N}$ with Frobenius number $f=n-1$. Since we need only to analyze the maximal ones, in turn these can be characterized by their intersection with $[1, h]$, where $\displaystyle h=\frac{f}{2} - 1$ if $f$ is even or $\displaystyle h = \frac{f-1}{2}$ if $f$ is odd. For more background on monoids and submonoids of $\mathbb{N}$ see \cite{RS}.

\vspace{1 mm}

We have seen the following being a key property of subalgebra $R$, its maximal ideal $\eme$ and its set $E$  (which is determined by $\eme$ clearly).

\begin{property3} The pair $(\eme, E)$ has property M if $n-1\in E$, and $n-1\notin E^{(2)}$ implies $x^{n-1}\notin \eme^2$.
\end{property3}

Suppose furthermore that we have given a linear basis of $\eme$ with elements $z_1$, ... , $z_{r-1}$, $z_{r} = x^{n-1}$. Then for $x^{n-1}$ to belong to $\eme^2$ and $n-1\notin E^{(2)}$, $z_r$ must be a linear combination of products $z_{i_j}z_{i_k}$ with $j, k < r$. Now, if $E$ were such that $E^{(2)}$ has no multiplicity as a set, then that can't happen, given that the valuation of a sum with each summand of different valuations is the minimum of those valuations. This proves:

\begin{prop} If $E^{(2)}$ has no multiplicity (i.e. $\{a,b\}, \{c, d\}$ in $E_{>0}$ such that $a+b=c+d < n$ implies $\{a,b\}=\{c,d\}$), then $E$ has Property M. 
\end{prop}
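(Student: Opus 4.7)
The plan is to assume $x^{n-1}\in \eme^2$ and derive a contradiction by comparing leading terms. First I would fix a $\K$-basis of $\eme$ adapted to $E$: for each $e\in E_{>0}$, pick a monic element $z_e\in R$ of valuation $e$, so $z_e = x^e + (\text{higher-valuation terms})$; these are linearly independent (distinct valuations) and exhaust $\eme$ by the definition of $E$. Since $n-1\in E$ we may take $z_{n-1}=x^{n-1}$. Because $(n-1)+e\geq n$ whenever $e\in E_{>0}$, every product $z_{n-1}z_e$ vanishes in $\K[x]/x^n$, so $\eme^2$ is spanned over $\K$ by the products $z_a z_b$ with $a,b\in E_{>0}\setminus\{n-1\}$ and $a+b<n$, each of which has leading monomial $x^{a+b}$ with coefficient $1$.

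Given an alleged expression
\[
x^{n-1} \;=\; \sum_{\{a,b\}} c_{a,b}\, z_a z_b, \qquad c_{a,b}\in \K,
\]
let $S$ be the set of unordered pairs $\{a,b\}$ with $c_{a,b}\ne 0$ and set $v=\min\{a+b:\{a,b\}\in S\}$. If $S$ is empty or $v\geq n$, the right-hand side is $0\neq x^{n-1}$. If $v=n-1$, then some pair with $a+b=n-1$ would place $n-1\in E^{(2)}$, contradicting the hypothesis of Property M. So the only remaining case is $v<n-1$.

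This is precisely where the no-multiplicity assumption enters. Since products $z_az_b$ with $a+b>v$ contribute only to monomials of valuation $>v$, the coefficient of $x^v$ on the right-hand side reduces to $\sum_{\{a,b\}\in S,\, a+b=v} c_{a,b}$. By no-multiplicity there is at most one unordered pair in $E_{>0}\times E_{>0}$ summing to $v$, so this sum collapses to a single scalar $c_{a,b}\neq 0$; but the coefficient of $x^v$ in $x^{n-1}$ is $0$, a contradiction. The only delicate bookkeeping is verifying that $z_{n-1}$ drops out of $\eme^2$ and that leading terms behave multiplicatively with coefficient $1$; once that is in place, the no-multiplicity hypothesis disposes of the minimum-valuation contribution immediately, and no harder estimate is needed.
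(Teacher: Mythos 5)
Your argument is correct and is essentially the paper's own proof: both reduce membership of $x^{n-1}$ in $\eme^2$ to a linear combination of products of monic basis elements, and both use the no-multiplicity hypothesis to conclude that the valuation of such a combination is the minimum of the $a+b$'s, hence lies in $E^{(2)}$ and cannot be $n-1$. Your write-up just makes explicit the leading-coefficient bookkeeping that the paper leaves as a one-line remark.
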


Furthermore notice that the empty set $E\cap [1, h] = \emptyset$ (where $h$ as before), corresponds to the submonoid of $\mathbb{N}$  given by $\{0, t, t+1, ...., f-1\}\cup [f+1, \infty)$, where $t=f/2+1$ if $f$ is even and $t=(f+1)/2$ otherwise. This is exactly the case of the ideal $m^2=0$, hence $x^{n-1}\notin \eme^2$. Thus we'll exclude the empty set from analysis. 
\vspace{2 mm}

A further analysis also leads to consider the following property: A pair $(\eme, E)$ consisting of an ideal $\eme$ and its set $E$ satisfies almost-uniqueness if the following holds:
\vspace{1 mm}

\begin{property2} There exists a linear basis of monic elements $z_e$ indexed by $e\in E$ such that for all pair of sets $\{a,b\}, \{c, d\}$ in $E_{>0}$ (of size $1$ or $2$) and such that $a+b=c+d < n$, taking the monic elements $z_a, z_b, z_c, z_d$ corresponding to them, the identity $z_az_b = z_cz_d$ holds.
\end{property2}

\begin{prop} AU implies M.
\end{prop}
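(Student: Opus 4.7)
The plan is to use the AU basis $\{z_e\}_{e\in E}$ to produce a controlled spanning set of $\eme^2$ indexed by $E^{(2)}$. The elements $\{z_e\}_{e\in E_{>0}}$ form a $\K$-basis of $\eme$, so $\eme^2$ is linearly spanned by the pairwise products $z_a z_b$ with $a,b\in E_{>0}$. Products with $a+b\geq n$ vanish in $\K[x]/x^n$; for $a+b<n$ the AU hypothesis says $z_a z_b$ depends only on the sum $s:=a+b\in E^{(2)}$, since the definition of AU explicitly allows the ``size $1$'' pair $\{a\}$ so that $z_a^2$ is identified with $z_c z_d$ whenever $2a=c+d<n$. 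Call this common product $y_s$; because the $z_e$ are monic of valuation $e$, $y_s$ is monic of valuation exactly $s$.

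Second, I would derive property M by a leading-term argument. Assume $n-1\in E$, $n-1\notin E^{(2)}$, and suppose for contradiction that $x^{n-1}\in\eme^2$. By the preceding paragraph one can write
\[
x^{n-1} \;=\; \sum_{s\in E^{(2)}}\alpha_s\, y_s
\]
for some $\alpha_s\in\K$. Letting $s_0$ be the smallest index with $\alpha_{s_0}\neq 0$, the right-hand side expands as $\alpha_{s_0}x^{s_0}+(\text{terms of valuation }>s_0)$, so its valuation is $s_0\in E^{(2)}$. But the left-hand side has valuation $n-1$, forcing $n-1=s_0\in E^{(2)}$, contradicting the hypothesis on $E$. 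Hence $x^{n-1}\notin\eme^2$, which is exactly property M.

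I do not anticipate a serious obstacle; the only point to check carefully is that AU really covers every pairwise product used, including the $a=b$ ``singleton'' case, and that monicity of the $z_e$ makes valuations add under multiplication (so each $y_s$ has a nonzero $x^s$ term). Once these are in place the argument is just the standard observation that a sum of elements of distinct valuations has valuation equal to the minimum.
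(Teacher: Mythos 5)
Your proposal is correct and follows essentially the same route as the paper: the paper's (very terse) proof also spans $\eme^2$ by the products $z_az_b$, uses AU to rule out ``nontrivial cancellations'' among products of equal valuation, and concludes by the fact that a sum of terms of distinct valuations has valuation equal to the minimum. Your explicit grouping of the products into well-defined elements $y_s$ indexed by $s\in E^{(2)}$ is exactly the detail the paper leaves implicit, and your care about the singleton case $\{a\}$ (so that $z_a^2$ is covered) is warranted and consistent with the paper's definition of AU.
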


Notice that a multiplicity free $E$ satisfies AU and both imply M.

\begin{proof} Same argument as above, follows by considering products with the same valuations, the main point is that there are no nontrivial cancellations.
\end{proof}


\subsection{$n=11$}

 By the remarks before, these partial-monoids correspond with ones having Frobenius number $f=10$. And those are in bijection with certain subsets of $[1,4]$, namely those partial-sub-monoids that when extended won't contain $10$. Within the set $\{1, 2, 3, 4\}=[1, 4]$, $E$ can't contain $1$ nor $2$ since they divide $10$, so only can contain $3, 4$ but can't contain both since then the extension would contain $3+3+4=10$. End up with $\{3\}$ or $\{4\}$ which produce the maximal ones below, which correspond to $\{3,6,8,9\}$ and $\{4,7,8,9\}$ (as subsets of $[1, 9]$).
\vspace{2 mm}

\begin{center}
$\begin{array}{ | c | c | c |  }
\hline 
\{0, 4, 7, 8, 9, 10\} & 2 & \{x^4+ax^5+bx^6, x^7, x^9, x^{10}\} \mid \text{any } a, b\in \K  \\
\hline
\{0, 3, 6, 8, 9, 10\} & 3 & \{x^3+ax^4+bx^5+cx^7, x^8, x^{10}\} \mid \text{any } a, b, c\in \K \\
\hline
\end{array}$
\end{center}

\vspace{1 mm}

For $\{x^4+ax^5+bx^6, x^7, x^9, x^{10}\}$, $\eme^2$ is spanned by $(x^4+ax^5+bx^6)^2$ hence $x^{10}\notin \eme^2$. 
\vspace{1 mm}

For $\{x^3+ax^4+bx^5+cx^7, x^8, x^{10}\}$, a linear generating set of $\eme^2$ consists of the powers $\{w^2, w^3\}$, where $w=x^3+ax^4+bx^5+cx^7$ and clearly such a combination is never equal to $x^{10}$.
\vspace{1 mm}

Alternatively, and more simply, notice that the sets $E^{(2)}$ involved don't have multiplicity. 


\subsection{$n=12$}

The possible sets for $n=12$, correspond with ones having Frobenius number $f=11$. Such is determined by a partial-sum closed subset of $\{1,2,3,4,5\} = [1,5]$ and its union with $11$ minus the complement in $[1,5]$.

Let's analyze the minimum positive integer of $E$ inside $[1, 5]$. Such a set can't contain $1$, if contains $2$ then contains $4$ and can't contain $3$ nor $5$ since $3+4+4=11$ and $5+4+2=11$, so if contains $2$, then it's $\{2,4\}$ that produces $\{0,2,4,6,8,10\}$. If doesn't contain 2, but contains $3$, then doesn't have $4$ nor $5$ since $3+4+4=11$ and $5+3+3=11$. Hence containing $3$ implies it's $\{0,3,6,7,9,10\}$. If contains $4$ might contain $5$, so two possible $\{0,4,5,8,9,10\}$ or $\{0,4,6,8,9,10\}$. Now if $E$ only contains $5$ from $[1,5]$ then have $\{0,5,7,8,9,10\}$. 

Produces the following possible sets for $n=12$:

\begin{center}
$\begin{array}{ | c | c | c |  }
\hline 
\{0,5,7,8,9,10,11\} &  1 & \{x^5+ax^6, x^7, x^8, x^9, x^{11}\}  \mid \text{any } a\in \K \\
\hline
\{0,4,6,8,9,10,11\} & 3 & \{x^4+ax^5+bx^7, x^6+cx^7, x^9, x^{11}\} \mid \text{any } a, b, c\in \K  \\
\hline
\{0,4,5,8,9,10,11\} & 4 & \{x^4+ax^6+cx^7, x^5+bx^6+dx^7, x^{11}\} \mid \text{any } a, b, c, d\in \K \\
\hline
\{0,3,6,7,9,10,11\} & 4 & \{x^3+ax^4+bx^5+cx^8, x^7+dx^8, x^{11}\} \mid \text{any } a, b, c, d\in \K  \\
\hline
\{0,2,4,6,8,10,11\} & 4 & \{x^2+ax^3+bx^5+cx^7+dx^9, x^{11}\}  \mid \text{any } a, b, c, d\in \K \\  
\hline
\end{array}$
\end{center}

Let's analyze one by one:

\begin{itemize}
\item $\{0,5,7,8,9,10,11\}$ has $E^{(2)} = \{5+5\}$, no multiplicity.
\item $\{0,4,6,8,9,10,11\}$ has $E^{(2)} = \{4+4, 4+6\}$, no multiplicity.
\item $\{0,4,5,8,9,10,11\}$ has $E^{(2)} = \{4+4, 4+5, 5+5\}$, no multiplicity.
\item $\{0,3,6,7,9,10,11\}$ has $E^{(2)} = \{3+3, 3+6, 3+7\}$, no multiplicity.
\item $\{0,2,4,6,8,10, 11\}$ and the algebra generators of $\eme$, $ \{x^2+ax^3+bx^5+cx^7+dx^9, x^{11}\}$. This satisfies property AU, given that's generated as algebra by two elements, namely $x^2+ax^3+bx^5+cx^7+dx^9$ and $x^{11}$, the latter which is a null element, namely, annihilates $\eme$. 
\end{itemize}


\subsection{$n=13$}

The possible sets for $n=13$, correspond with ones having Frobenius number $f=12$. Such is determined by a partial-sum closed subset of $\{1,2,3,4,5\}=[1, 5]$ and its union with $12$ minus the complement in $[1,5]$.

Let's analyze the minimum positive integer of $E$ inside $[1, 5]$. Need to avoid divisors of $12$, so none of $1,2,3,4$ work. Hence only $5$ which gives rise to $E=\{0, 5, 8, 9, 10, 11, 12\}$ and $E^{(2)} = \{5+5\}$ which has no multiplicity.


\subsection{$n=14$}

The possible sets for $n=14$, correspond with ones having Frobenius number $f=13$. Such is determined by a partial-sum closed subset of $\{1,2,3,4,5,6\}=[1, 6]$ and its union with $13$ minus the complement in $[1, 6]$. Let's analyze the minimum positive integer of $E$ inside $[1, 6]$.

\begin{itemize}
\item least is $2$, then contains $2,4,6$ can't contain any other element since $2+2+2+2+2+3=13$, $2+2+2+2+5=13$, hence it's $\{0,2,4,6,8,10,12\}$
\item least is $3$, then can't contain $4$ since $3+3+3+4=13$, can't contain $5$ since $3+5+5=13$, and so it's $\{0,3,6,8,9,11,12\}$
\item least is $4$ then can't contain $5$ since $4+4+5=13$ and so can either contain or not $6$, giving $\{0,4,7,8,10,11,12\}$ and $\{0,4,6,8,10,11,12\}$
\item least is $5$, can contain $6$, so $\{0,5,6,9,10,11,12\}$, $\{0,5,7,9,10,11,12\}$
\item least is $6$ gives $\{0,6,8,9,10,11,12\}$
\end{itemize}


{\bf{We exclude the analysis and algebra generators for $\{0,4,6,8,10,11,12,13\}$ from the following table in light of the next section (they turn out not to be always independent modulo $\eme^2$)}}. For the rest, the analysis shows the validity of property AU:

\begin{center}
$\begin{array}{ | c | c | c |  }
\hline
\{0,6,8,9,10,11,12,13\} & 1 & \{x^6+ax^7, x^8, x^9, x^{10}, x^{11}, x^{13}\} \mid \text{any } a\in \K \\
\hline
\{0,5,7,9,10,11,12,13\}& 5 & \{x^5+ax^6+bx^8+dx^9, x^7+cx^8+ex^9, x^9, x^{11}, x^{13}\}  \mid \text{any } a, b, c, d, e\in \K \\
\hline
\{0,5,6,9,10,11,12,13\}& 4 & \{x^5+ax^7+cx^8, x^6+bx^7+dx^8, x^9, x^{13}\}  \mid \text{any } a, b, c, d\in \K \\
\hline
\{0,4,7,8,10,11,12,13\}& 4 & \{x^4+ax^5+bx^6+cx^9, x^7+dx^9, x^{10}, x^{13}\} \mid \text{any } a, b, c, d\in \K \\
\hline
\{0,4,6,8,10,11,12,13\}& 4 & - \\
\hline
\{0,3,6,8,9,11,12,13\}& 5 & \{x^3+ax^4+bx^5+cx^7+dx^{10}, x^8+ex^{10}, x^{13}\} \mid \text{any } a, b, c, d, e\in \K  \\
\hline
\{0,2,4,6,8,10,12,13\}& 5 & \{x^2+ax^3+bx^5+cx^7+dx^9+ex^{11}, x^{13}\} \mid \text{any } a, b, c, d, e\in \K \\  
\hline
\end{array}$
\end{center}

Let's analyze one by one:

\begin{itemize}

\item $\{0,6,8,9,10,11,12,13\}$ has $E^{(2)} = \{6+6\}$, no multiplicity

\item $\{0,5,7,9,10,11,12,13\}$ has $E^{(2)}= \{5+5, 5+7\}$, no multiplicity

\item $\{0,5,6,9,10,11,12,13\}$ has $E^{(2)}=\{5+5, 5+6\}$, no multiplicity

\item $\{0,4,7,8,10,11,12,13\}$ has $E^{(2)} = \{4+4, 4+7, 4+8\}$, no multiplicity

\item $\{0,3,6,8,9,11,12,13\}$, here $\{x^3+ax^4+bx^5+cx^7+dx^{10}, x^8+ex^{10}, x^{13}\}$ are algebra generators of $\eme$. Here the (multi)set is $E^{(2)}=\{3+3, 3+6, 3+8, 3+9, 6+6\}$, and the multiples of $3$ are the only ones giving rise to non-unique sums, i.e. if $w=x^3+ax^4+bx^5+cx^7+dx^{10}$, then $w^3w^9=w^6w^6$. From here the property AU follows.

\item $\{0,2,4,6,8,10,12,13\}$, here $\{x^2+ax^3+bx^5+cx^7+dx^9+ex^{11}, x^{13}\}$ are algebra generators of $\eme$. This satisfies property AU, given the the algebra is generated by two elements, namely $x^2+ax^3+bx^5+cx^7+dx^9+ex^{11}$ and $x^{13}$ and the latter is a null element.

\end{itemize}


\section{Minimal dimension for the existence of a non-thin subalgebra is $n=14$}

\subsection{Main Result}

We have an explicit classification of the $\K$-subalgebras of $\K[x]/x^n$ for $n\leq 13$. The purpose of this section is to show that for $n=14$ we get a non-thin subalgebra.

\begin{thm} There exists a non-thin subalgebra of $\K[x]/x^{14}$, denoted $\cal{R}$. Hence $n=14$ is the minimal $n$ with this property.
\end{thm}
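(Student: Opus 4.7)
The construction is driven by the exponent set $E = \{0, 4, 6, 8, 10, 11, 12, 13\}$ deliberately left out of the $n = 14$ table. Its partial-monoid generators are $\{4, 6, 11, 13\}$, so $d(E) = 4$, and the key coincidence is $12 = 4 + 8 = 6 + 6$: the valuation $12$ appears in $E^{(2)}$ with multiplicity, which is precisely what opens the door to a nontrivial relation in $\eme^2$. The goal is to produce a subalgebra $\mathcal{R}$ with $E(\mathcal{R}) = E$ but $\dim(\eme_{\mathcal{R}}/\eme_{\mathcal{R}}^2) \leq 3$.

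I take $\mathcal{R} \subset \K[x]/x^{14}$ to be the $\K$-algebra generated by
\[
a = x^4 + x^5, \qquad b = x^6 + x^7, \qquad c = x^{11}.
\]
The crucial computation is $a^3 \equiv x^{12} + 3x^{13}$ and $b^2 \equiv x^{12} + 2x^{13}$ modulo $x^{14}$, whence $a^3 - b^2 = x^{13}$ identically. Because the scalar $3 - 2 = 1$ is a unit in every field, this identity is characteristic-free, and it already places $x^{13}$ inside $\eme_{\mathcal{R}}^2$. This is the whole idea; everything else is bookkeeping.

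The verification proceeds by listing the surviving monomials $a^i b^j c^k$ modulo $x^{14}$. One checks $ac = bc = c^2 = 0$ (valuations $\geq 15$) and $a^2 b = a b^2 = b^3 = 0$ (valuations $\geq 14$), so $\eme_{\mathcal{R}}$ is spanned by the seven elements $\{a, b, c, a^2, ab, b^2, a^3\}$. Since $b^2$ and $a^3$ share leading valuation $12$ while their difference is $x^{13}$, replacing $a^3$ by $a^3 - b^2$ yields a $\K$-basis whose leading valuations are the distinct values $4, 6, 8, 10, 11, 12, 13$; hence $E(\mathcal{R}) = \{0, 4, 6, 8, 10, 11, 12, 13\}$ and $\dim \eme_{\mathcal{R}} = 7$. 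For the square, $\eme_{\mathcal{R}}^2$ is spanned by $\{a^2, ab, b^2, a^3\}$, and the same leading-term argument (using $a^3 - b^2 = x^{13}$) shows these four elements are linearly independent. Therefore $\dim \eme_{\mathcal{R}}^2 = 4$ and $\dim(\eme_{\mathcal{R}}/\eme_{\mathcal{R}}^2) = 3 < 4 = d(E)$, so $\mathcal{R}$ is non-thin.

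The minimality half of the theorem is already in hand: Theorem 2 settles $n \leq 10$, and the explicit analyses in the preceding subsections cover $n = 11, 12, 13$. The only genuinely delicate point is the choice of perturbations. With $a = x^4$, $b = x^6$ one gets no relation at $x^{13}$; with $a = x^4 + x^5$, $b = x^6$ the would-be relation reads $3 x^{13}$, which vanishes in characteristic $3$. The symmetric choice $a = x^4 + x^5$, $b = x^6 + x^7$ produces the coefficient $3 - 2 = 1$, and this small piece of cleverness is what makes the construction work uniformly over every field $\K$.
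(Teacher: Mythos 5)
Your proof is correct and rests on exactly the same key identity as the paper's, namely $(x^4+x^5)^3-(x^6+x^7)^2=x^{13}$ in $\K[x]/x^{14}$, combined with the prior thinness results for $n\leq 13$. The only difference is cosmetic: you adjoin $x^{11}$ and work with the exponent set $\{0,4,6,8,10,11,12,13\}$, obtaining $\dim(\eme/\eme^2)=3<4=d(E)$, whereas the paper omits $x^{11}$ and uses $\{0,4,6,8,10,12,13\}$, obtaining $2<3$ (the variant it later shows is extremal in $d(E)$ and $\#E$); both verifications are sound.
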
 

\begin{proof} Since we have shown that all subalgebras are thin for $n\leq 13$, we need to construct one for $n=14$.

\vspace{1 mm}

Consider the partial-monoid $E= \{0,4,6,8,10,12,13\}$ and the subalgebra $\cal{R}$ generated by $\{a=x^4+x^5, b=x^6+x^7, c=x^{10}, d=x^{11}, e=x^{13} \}$ inside $\K[x]/x^{14}$, which has elements:
\begin{itemize}
\item $a=x^4+x^5$
\item $b=x^6+x^7$
\item $a^2=x^8+2x^9+x^{10}$
\item $ab=x^{10}+2x^{11}+x^{12}$
\item $a^3=x^{12}+3x^{13}$
\item $b^2=x^{12}+2x^{13}$
\end{itemize}

From here it's clear that the vector space with basis $\{1, a, b, a^2, ab, x^{12}, x^{13}\}$ is closed under multiplication and hence it's the sought after algebra, whose set of exponents is precisely $E$. Notice that $x^{13}\in \eme^2$ but $13\notin E^{(2)}$. We can write the formula in $\K[x]/x^{14}$ as $$(x^4+x^5)^3-(x^6+x^7)^2=x^{13}$$

Furthermore, the sizes are $\#(E_{>0})=6$, $\#(E^{(2)})=3$ so $d(E)=6-3=3$, and the dimensions are $\dim(\eme) = 6$, $\dim(\eme^2)=4$ and so $\dim(\eme/\eme^2)=2 < d(E)=3$. Also, the set $E$ does have multiplicity: $4+4+4=6+6$, which is to be expected given property AU.

\end{proof}


\subsection{Count of the number of thin subalgebras in dimension $14$}

We have a generating set as algebra, not necessarily they're independent modulo $\eme^2$ as we saw, $\{x^4+ax^5+bx^7+dx^9+fx^{11}, x^6+cx^7+ex^9+gx^{11}, x^{13}\}$. Notice $(x^4+ax^5+bx^7+dx^9+fx^{11})^3=(x^4+ax^5)^3=x^{12}+3ax^{13}$ and $(x^6+cx^7+ex^9+gx^{11})^2=(x^6+cx^7)^2=x^{12}+2cx^{13}$ and hence it's thin iff $3a=2c$ and $b, d, e, f, g$ arbitrary. Notice that since in any field, either $3$ or $2$ is not zero, we can always solve for either $a$ or $c$, thereby one of them is arbitrary and the other is determined. 

Similarly, for the set $\{0,4,6,8,10,11, 12,13\}$, a generating set (together with $1$) as an algebra is $\{x^4+ax^5+bx^7+dx^9, x^6+cx^7+ex^9, x^{11}, x^{13}\}$. And the identical calculation shows that the algebra is thin iff $3a=2c$ and $b, d, e$ are arbitrary. We obtain:

\begin{prop} With exponent set $\{0,4,6,8,10,12,13\}$, there are $q^7-q^6$ non-thin subalgebras of $\F_q[x]/x^{14}$ and $q^6$ thin ones. Also, there are $q^5-q^4$ non-thin subalgebras of $\F_q[x]/x^{14}$ with exponent set $\{0,4,6,8,10, 11, 12,13\}$ and $q^4$ thin ones.
\end{prop}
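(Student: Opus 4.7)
The plan is to exhibit a parameterization of the subalgebras with each fixed exponent set, extract a single polynomial equation characterizing thinness, and count its solutions over $\F_q$.

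First, I would apply the minimal extensions recursion (Theorem 1, iterated through $n \leq 13$) to produce all algebra generators. For $E_1 = \{0,4,6,8,10,12,13\}$, the subalgebras with this exponent set are in bijection with $(a,b,c,d,e,f,g) \in \F_q^7$ via
$$w_4 = x^4 + ax^5 + bx^7 + dx^9 + fx^{11}, \qquad w_6 = x^6 + cx^7 + ex^9 + gx^{11}, \qquad w_{13} = x^{13},$$
yielding $q^7$ subalgebras total. For $E_2 = \{0,4,6,8,10,11,12,13\}$, since $11 \in E_2$ the two $x^{11}$ tail coefficients drop out, leaving $(a,b,c,d,e) \in \F_q^5$ and a total of $q^5$.

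Next, I would compute $\eme^2$ modulo $x^{14}$. Products involving $w_{13}$, and in the $E_2$ case also the generator $x^{11}$, have valuation $\geq 14$ and vanish, so $\eme^2$ is spanned by products of $w_4, w_6$ of valuation $\leq 13$: namely $w_4^2$, $w_4 w_6$, $w_4^3$, and $w_6^2$, with respective valuations $8, 10, 12, 12$. Splitting $w_4 = (x^4 + ax^5) + (bx^7 + dx^9 + fx^{11})$ and $w_6 = (x^6 + cx^7) + (ex^9 + gx^{11})$, only the leading pairs can contribute terms of valuation $\leq 13$ in $w_4^3$ and $w_6^2$, giving
$$w_4^3 \equiv x^{12} + 3a\, x^{13}, \qquad w_6^2 \equiv x^{12} + 2c\, x^{13} \pmod{x^{14}}.$$
Therefore $w_4^3 - w_6^2 = (3a - 2c)x^{13}$, and $x^{13} \in \eme^2$ if and only if $3a \neq 2c$. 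When $3a = 2c$, the two valuation-$12$ products coincide and $\dim(\eme/\eme^2) = d(E)$ — thin; otherwise $\eme^2$ picks up $x^{13}$ and $\dim(\eme/\eme^2) = d(E) - 1$ — non-thin.

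Finally, since at least one of $2, 3$ is nonzero in any field, the equation $3a - 2c = 0$ cuts out a line through the origin in $\F_q^2$ with exactly $q$ points. For $E_1$ this yields $q \cdot q^5 = q^6$ thin subalgebras and $q^7 - q^6$ non-thin; for $E_2$, $q \cdot q^3 = q^4$ thin and $q^5 - q^4$ non-thin. The main obstacle is the first step — rigorously justifying the parameterization and confirming that the listed generators together with their non-$E$ tails account for every subalgebra with the given exponent set — which is handled inductively via the minimal extensions machinery used throughout the preceding sections.
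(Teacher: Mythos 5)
Your proposal is correct and follows essentially the same route as the paper: parameterize the subalgebras by the tail coefficients ($q^7$ resp.\ $q^5$ of them), observe that $w_4^3=x^{12}+3ax^{13}$ and $w_6^2=x^{12}+2cx^{13}$ so that thinness is equivalent to $3a=2c$, and count the $q$ solutions of that linear condition (using that $2$ or $3$ is invertible in any field). No substantive difference from the paper's argument.
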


In light of the analysis for $n=14$ carried out previously, we have:

\begin{prop} For all partial-monoids $E$ except those containing $\{0,4,6,8,10,12,13\}$ (there are exactly two of those, namely $\{0,4,6,8,10,12,13\}$ and $\{0,4,6,8,10,11,12,13\}$), every subalgebra $R\subseteq \K[x]/x^{14}$ with exponent set $E=E(R)$ is thin.
\end{prop}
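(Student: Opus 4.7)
I plan to partition the sub-partial-monoids $E\subseteq [0,13]$ by the behavior of the top exponent into three mutually exclusive cases: (A) $13\notin E$; (B) $13\in E$ with $13\notin E^{(2)}$ (Property N); (C) $13\in E$ with $13\in E^{(2)}$. In each case I will compare $\dim(\eme_R/\eme_R^2)$ to $d(E)$ via the natural projection $\varphi\colon \K[x]/x^{14}\to \K[x]/x^{13}$, using the inductive hypothesis, established by Theorem 2 and Sections 5.1--5.3, that every subalgebra of $\K[x]/x^n$ is thin for $n\leq 13$.

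Cases (A) and (C) are dispatched by dimension bookkeeping. In (A), $x^{13}\notin R$, so $\varphi|_R$ is injective and $R\cong \varphi(R)$ as $\K$-algebras; thinness transfers because both sides of the defining equality $\dim(\eme/\eme^2)=d(E)$ are preserved under $\K$-algebra isomorphism. In (C), $x^{13}$ is automatically the product of two monic elements whose valuations lie in $E_{>0}$ and sum to $13$, so $x^{13}\in \eme_R^2$; a direct count gives $\dim(\eme_R)=\dim(\eme_{\varphi(R)})+1$ and $\dim(\eme_R^2)=\dim(\eme_{\varphi(R)}^2)+1$, and $d(E)=d(E\setminus\{13\})$ since $13\in E^{(2)}$ means $13$ is not a generator of $E$. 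Thinness of $\varphi(R)$ then passes to $R$.

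Case (B) is the heart of the argument. The same bookkeeping shows that $R$ is thin iff Property M holds, so by Proposition 6 we need only inspect sub-partial-monoids with multiplicity in $E^{(2)}$. Of the seven maximal Property-N partial-monoids listed in Section 5.4, exactly three carry multiplicity. Any sub-partial-monoid of $\{0,3,6,8,9,11,12,13\}$ retaining its multiplicity $12=3{+}9=6{+}6$ must contain $3$; then $R$ is generated by $w=x^3+\cdots$ (possibly together with an $x^8+\cdots$ and $x^{13}$), so the canonical choice $z_6=w^2,z_9=w^3,z_{12}=w^4$ realizes Property AU and Proposition 7 yields M. Similarly, any sub-partial-monoid of $\{0,2,4,6,8,10,12,13\}$ carrying multiplicity must contain $2$, and AU follows from $z_e=w^{e/2}$ for $w=x^2+\cdots$. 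It remains to examine $\{0,4,6,8,10,11,12,13\}$, whose multiplicity $12=4{+}8=6{+}6$ has common divisor $2\notin E$, so powers of a single generator no longer suffice; here a sub-partial-monoid retains both representations of $12$ iff $\{4,6,8\}\subseteq E$, and a closure check inside $[0,13]$, combined with Property N ruling out any $k$ with $13-k\in E$, leaves precisely $E\in\{\{0,4,6,8,10,12,13\},\{0,4,6,8,10,11,12,13\}\}$. These are the two exceptional sets, for which Section 5.2 exhibits non-thin subalgebras.

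The main obstacle is the Case (B) enumeration: one must verify both that the ``resolvable'' multiplicities really are resolved (i.e.\ that for every sub-partial-monoid $E$ which inherits multiplicity from $\{0,3,6,8,9,11,12,13\}$ or $\{0,2,4,6,8,10,12,13\}$ the relevant minimal-valuation generator is in $E$), and that the closure enumeration inside $\{0,4,6,8,10,11,12,13\}$ indeed yields no partial-monoid containing $\{4,6,8\}$ beyond the two listed. Both are finite but finicky case checks.
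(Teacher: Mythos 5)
Your overall strategy---reduce via the projection $\varphi$ to the maximal partial-monoids with Frobenius number $13$, dispose of the multiplicity-free ones, and handle the three maximal monoids with multiplicity via Property AU---is essentially the paper's route, and your Cases (A) and (C) make explicit some bookkeeping the paper leaves to the theory of minimal extensions. However, there is a concrete false step in Case (B): the claim that ``any sub-partial-monoid of $\{0,2,4,6,8,10,12,13\}$ carrying multiplicity must contain $2$'' is wrong. The set $\{0,4,6,8,10,12,13\}$ is itself a sub-partial-monoid of $\{0,2,4,6,8,10,12,13\}$, it does not contain $2$, and it carries the multiplicity $12=4+8=6+6$. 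This is not a peripheral oversight: the counterexample is precisely one of the two exceptional exponent sets, so that branch of your argument, read literally, certifies as thin exactly the algebras that Theorem 6 exhibits as non-thin. Because a partial-monoid can sit inside more than one maximal monoid, your branches overlap, and as written the proof simultaneously asserts (via the $\{0,2,4,6,8,10,12,13\}$ branch) and denies (via the $\{0,4,6,8,10,11,12,13\}$ branch) thinness for exponent set $\{0,4,6,8,10,12,13\}$.

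The repair is short. In the $\{0,2,4,6,8,10,12,13\}$ branch, list the repeated sums: $8=2+6=4+4$, $10=2+8=4+6$, and $12=2+10=4+8=6+6$. Any multiplicity at $8$ or $10$, and any multiplicity at $12$ using the representation $\{2,10\}$, forces $2\in E$; then closure forces $E=\{0,2,4,6,8,10,12,13\}$ and your choice $z_e=w^{e/2}$ gives AU. The only $2$-free multiplicity is $12=4+8=6+6$, which forces $\{4,6,8\}\subseteq E$ and hence, by closure together with $13\in E$, $E=\{0,4,6,8,10,12,13\}$---one of the two excluded sets, to be shunted to the exceptional list rather than argued thin. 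With that correction (and adjusting your citations: the multiplicity-free and AU criteria are the paper's Propositions 4 and 5, not 6 and 7), your argument goes through and matches the paper's.
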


We have the table for algebras $R\subset \K[x]/x^{14}$ attached to (maximal) partial-monoid $\{0,4,6,8,10,11,12,13\}$:
\vspace{1 mm}

$\begin{array}{| c | c | c | c |  }
\hline
\dim(\eme/\eme^2) & \text{Generators}  & \text{type} \\ 
\hline
4 & \{x^4+ax^5+bx^7+dx^9, x^6+cx^7+ex^9, x^{11}, x^{13}\} \mid \text{any } a, b, c, d, e \text{ with } 3a=2a \in \K & \text{ thin} \\
\hline
3 & \{x^4+ax^5+bx^7+dx^9, x^6+cx^7+ex^9, x^{11}\} \mid \text{any } a, b, c, d, e \text{ with } 3a\neq 2a \in \K & \text{ non-thin}\\
\hline
\end{array}$

\subsection{Other results concerning non-thin subalgebras}

Notice that the algebra found $\cal{R}$ and its monoid $E$ has $d(E)=3$. And this is also minimal:

\begin{prop} If $d(E)=1$ or $d(E)=2$, then $\dim(\eme/\eme^2)=d(E)$. 
\end{prop}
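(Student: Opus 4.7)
The plan is to reduce everything to Nakayama's lemma in the finite-dimensional local ring $(R,\eme)$ with residue field $\K$. The upper bound $\dim(\eme/\eme^2)\le d(E)$ is already provided by Proposition~22 of \cite{Franco-min}, so the task is to produce the matching lower bound when $d(E)\le 2$. Nakayama identifies $\dim_\K(\eme/\eme^2)$ with the minimum number of generators of $\eme$ as an ideal of $R$, and this is the fact to exploit.

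For $d(E)=1$: if one had $\dim(\eme/\eme^2)=0$, Nakayama would force $\eme=\eme^2$ and hence $\eme=0$, giving $R=\K$ and $d(E)=0$, contradicting the hypothesis. Thus $\dim(\eme/\eme^2)\ge 1$, and together with the upper bound this yields equality.

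For $d(E)=2$: by the same reasoning, $\dim(\eme/\eme^2)\in\{1,2\}$. Suppose for contradiction that $\dim(\eme/\eme^2)=1$. Nakayama then produces $z\in\eme$ with $\eme=zR$, necessarily of some positive valuation $v=v(z)$. Then $R=\K\oplus\eme=\K+zR$; substituting into itself gives $R=\K+z\K+z^2R$ and, iterating, $R=\K+z\K+\cdots+z^{m-1}\K+z^mR$ for every $m$. Since $R$ is finite-dimensional over $\K$, some power $z^m$ must vanish, so the iteration terminates and $R=\K[z]$. Inside $\K[x]/x^n$, the powers $z^0,z,\ldots,z^{m-1}$ have distinct valuations $0,v,2v,\ldots,(m-1)v$, so every element of $R$ has valuation in this set, whence $E=\{0,v,2v,\ldots\}\cap[0,n-1]$ is cyclic and $d(E)=1$, contradicting $d(E)=2$. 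Therefore $\dim(\eme/\eme^2)=2$.

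The principal obstacle is the passage from the ideal-theoretic equality $\eme=zR$ to the algebra equality $R=\K[z]$; this relies on the termination of the iteration $R=\K+zR$, which in turn follows because $R$ is Artinian (equivalently, finite-dimensional over $\K$), forcing some power $z^m$ to be zero. Once $R=\K[z]$ is in hand, the cyclic structure of $E$ and the value $d(E)=1$ are immediate, closing the contradiction.
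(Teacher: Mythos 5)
Your proof is correct and follows essentially the same route as the paper: both arguments show that $\dim(\eme/\eme^2)=1$ forces $\eme$ to be generated by a single element $z$ (necessarily of minimal positive valuation), that nilpotence of $\eme$ then gives $R=\K[z]$, and that consequently every valuation in $E$ is a multiple of $\nu(z)$, contradicting $d(E)=2$. Your use of Nakayama and the explicit iteration $R=\K+zR$ merely formalizes the step the paper disposes of parenthetically via nilpotence of $\eme$.
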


\begin{proof} By the inequality $\dim(\eme_R/\eme_R^2)\leq d(E)$, the case $d(E)=1$ follows since $\eme/\eme^2$ is always nonzero. 

Assume for contradiction that $R$ is such that $\eme/\eme^2$ is one dimensional but $d(E)=2$. Then take monic elements $a, b$ with $\nu(a)$ the minimal valuation and $\nu(b)$ the other generator. By minimality of the valuation, since $\eme/\eme^2$ is one dimensional, $a$ generates $R$ as algebra hence $b$ must be a polynomial in $a$ (this is evident here, one reason being that $\eme$ is nilpotent, so the ideal generated by $a$ is the set of linear combinations of positive powers of $a$); and so the valuation $\nu(b)$ is a multiple of $\nu(a)$, which is a contradiction with $d(E)=2$.
\end{proof}

As a corollary to the proof, we get that for a non-thin subalgebra, the minimum dimension of $\eme/\eme^2$ must be $2$:

\begin{prop} If $\dim(\eme/\eme^2)=1$, then $\dim(\eme/\eme^2)=d(E)$.
\end{prop}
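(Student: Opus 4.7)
The strategy is to show directly that $\dim(\eme/\eme^2)=1$ forces $d(E)=1$; the stated equality then follows trivially. This is essentially the contrapositive of (and uses the same idea as) the previous proposition, but run unconditionally on $d(E)$ rather than only for $d(E)=2$.

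First I would pick a monic element $a\in\eme$ whose image is a basis of the one-dimensional space $\eme/\eme^2$. Since $\eme$ is a nilpotent ideal in the finite-dimensional local algebra $R$, a standard argument (either Nakayama's lemma, or the direct inductive one already used in the proof of the preceding proposition) shows that $a$ generates $\eme$ as an ideal, and in fact, because $\eme$ is nilpotent, $\eme$ coincides with the $\K$-span of the positive powers $a,a^2,a^3,\ldots$. In particular every element of $\eme$ is a polynomial in $a$ with zero constant term.

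Now I would pass to valuations. Any nonzero polynomial in $a$ with no constant term has valuation equal to $k\,\nu(a)$ for some $k\geq 1$, because the leading term of such a polynomial is a nonzero scalar multiple of some $a^k$, whose valuation is $k\,\nu(a)$. Hence every element of $E_{>0}$ is a positive integer multiple of $\nu(a)$, so $E$ is generated as a partial-monoid by the single element $\nu(a)$, giving $d(E)=1$. Combined with the hypothesis $\dim(\eme/\eme^2)=1$, this yields $\dim(\eme/\eme^2)=d(E)$.

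I do not expect any real obstacle here: the entire content is already present in the argument for the $d(E)=2$ case of the previous proposition, and one just has to observe that that argument never used the specific value $d(E)=2$, only the existence of a second generator of $E$ not a multiple of the first. The mildest care is needed at the step asserting that $a$ generates $\eme$ as an ideal in the nilpotent setting, but this is routine and already invoked (``$\eme$ is nilpotent, so the ideal generated by $a$ is the set of linear combinations of positive powers of $a$'') in the preceding proof.
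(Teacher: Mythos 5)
Your proof is correct and follows essentially the same route as the paper, which obtains this proposition as a corollary of the proof of the preceding one: a single monic generator $a$ of $\eme$ forces every element of $\eme$ to be a polynomial in $a$ with zero constant term, so every valuation in $E_{>0}$ is a multiple of $\nu(a)$ and $d(E)=1$. Your observation that the earlier argument never actually used $d(E)=2$ is exactly the point the paper is making when it states this as a corollary to that proof.
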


One final question to consider is the size of $E$, namely its number of elements. The set $E$ we found has $7$ elements. Here we'll prove that's the least one can do.

\begin{prop} If $\#(E)\leq 6$, then $\dim(\eme/\eme^2)=d(E)$. 
\end{prop}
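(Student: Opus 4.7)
The plan is to combine the preceding Proposition covering $d(E)\leq 2$ with the ``no multiplicity'' argument from the paragraph preceding the Proposition ``$E^{(2)}$ multiplicity-free implies Property M''. If $d(E)\leq 2$, that earlier Proposition gives the conclusion immediately, so we may assume $d(E)\geq 3$. From $\#E\leq 6$ we have $\#E_{>0}\leq 5$, and since $d(E)=\#E_{>0}-\#E^{(2)}$, we get $\#E^{(2)}\leq 5-3=2$.

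The combinatorial core is the claim that any multiplicity in $E^{(2)}$ forces $\#E^{(2)}\geq 3$. A multiplicity means two distinct multisets $\{a,b\}\neq\{c,d\}$ in $E_{>0}$ with $a+b=c+d=:s<n$. A shared entry would force $\{a,b\}=\{c,d\}$, so either all four of $a,b,c,d$ are distinct, or exactly one of the multisets is a doubleton. In the all-distinct case we may arrange $a<c<d<b$; then $2a<a+c<2c<s=a+b$ are four distinct elements of $E^{(2)}$ (each a sum of two elements of $E_{>0}$, each bounded above by $s<n$). In the doubleton case, WLOG $a=b$ and $2a=c+d$ with $c<d$, which forces $c<a<d$; then $2c<a+c<2a=s$ are three distinct elements of $E^{(2)}$. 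Either way $\#E^{(2)}\geq 3$, so under our hypothesis $\#E^{(2)}\leq 2$ the set $E^{(2)}$ is multiplicity-free.

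With no multiplicity, the argument preceding the Property M Proposition generalizes from $n-1$ to any generator $g\in E_{>0}\setminus E^{(2)}$: expanding an arbitrary $w\in\eme^2$ as a linear combination of products $z_e z_{e'}$ for $e,e'\in E_{>0}$ and tracking the minimum of $e+e'$ over terms with nonzero coefficient, the absence of multiplicity rules out cancellation at the leading term, so $\nu(w)\in E^{(2)}$. Combined with the fact that the dimension of any ideal of $\K[x]/x^n$ equals the number of distinct valuations of its nonzero elements, this yields $\dim(\eme^2)=\#E^{(2)}$ and $\dim(\eme)=\#E_{>0}$, hence $\dim(\eme/\eme^2)=d(E)$. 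The only delicate step is the distinctness bookkeeping in the combinatorial lemma, in particular the chain $2a<a+c<2c<s$ in the all-distinct case, which uses only $a<c<d<b$ and $s=a+b=c+d$.
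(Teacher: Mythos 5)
Your proof is correct, but it takes a genuinely different route from the paper's. The paper argues by minimal counterexample: it reduces to the case $x^{n-1}\in\eme^2$ with $n-1\notin E^{(2)}$, pins down $5\leq\#(E)\leq 6$ using the chain $\{0,a,2a,n-1\}\subseteq E$, and then in each case identifies $E^{(2)}$ explicitly as $\{2a\}$ or $\{2a,a+a_2\}$ (via ad hoc arguments that $2a$ and $a+a_2$ are the two smallest sums and each is uniquely represented), concluding via Property M. You instead skip the induction entirely: from $d(E)\geq 3$ and $\#E_{>0}\leq 5$ you get $\#E^{(2)}\leq 2$, and you prove a clean standalone lemma that any multiplicity forces $\#E^{(2)}\geq 3$ (the chains $2a<a+c<2c<s$ and $2c<a+c<2a$ both check out, and the trichotomy of cases is exhaustive since two doubletons with equal sum must coincide). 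You then upgrade Property M to the exact count $\dim(\eme^2)=\#E^{(2)}$ via the same no-cancellation argument applied to an arbitrary element of $\eme^2$ rather than just to $x^{n-1}$, together with the standard fact that the dimension of a subspace of $\K[x]/x^n$ equals the number of distinct valuations it realizes. Your version buys more: it proves the stronger statement that \emph{any} $E$ with at most two non-generator positive elements (i.e.\ $\#E_{>0}-d(E)\leq 2$) is multiplicity-free and hence yields only thin algebras, with no hypothesis on $\#(E)$ or on $n$; it also avoids the slightly delicate projection-to-$\K[x]/x^{n-1}$ step in the paper's minimality argument. The paper's version stays entirely within the already-stated Property M framework and needs no strengthening of the no-cancellation lemma. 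Both correctly locate the same tightness: the exceptional $E=\{0,4,6,8,10,12,13\}$ has exactly three non-generators, just past your bound.
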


\begin{proof} If there's a counterexample to the statement, with minimal size $n$, one would have $n-1\in E$, and $n-1\notin E^{(2)}$, i.e. $n-1$ is a generator. Furthermore, a counterexample with $n$ minimal satisfies $x^{n-1}\in \eme^2$ (Otherwise considering the algebra generated by the rest of the generators doesn't contain $x^{n-1}$ and projecting to $\K[x]/x^{n-1}$ gives an embedding and an example with $n-1 < n$.) Henceforth we assume this. By above, we can assume $d(E)\geq 3$. Denote the smallest element of $E$ by $a$. If $2a\geq n$, then all products of elements in $\eme$ are zero, i.e. $\eme^2=0$, and in that case, given the obvious inequality, $d(E)\leq \#(E)-1$, we have an equality $d(E)\leq \#(E)-1=\dim(\eme)=\dim(\eme/\eme^2)\leq d(E)$. Hence we can assume that's not the case, namely $d(E)\leq \#(E)-2$. And moreover, $n-1\neq 2a$ since $n-1$ is a generator of $E$, hence $E$ contains at least $4$ elements $\{0, a, 2a, n-1\}$, but if $\#(E)=4$, then this set would equal $E$ and $d(E)=2$, which is not the case. Also notice that the element $2a$ is always multiplicity-free, i.e. the equation $z+w=a+a\in E$ with $w, z \in E_{> 0}$ implies $z=w=a$.

Hence we arrive at $E$ has $5\leq \#(E)\leq 6$ elements and $d(E)\leq \#(E)-2$. In the two cases, we'll show that we end up with a multiplicity free $E^{(2)}$ which by Proposition 4, implies $(\eme, E)$ has property N, which will be the desired contradiction. Cases:
\begin{itemize} 

\item $\#(E)=5$. Then by assumption $3\leq d(E)$ and $d(E)\leq 5-2=3$, hence $d(E)=3$. Since $\#(E_{> 0})= 5-1=4$, this says that only one positive element is not a generator. By the above remark, that element is $a+a$, and so any other sum is larger than $n-1$. Hence $E^{(2)}=\{2a=a+a\}$ and $E$ has no multiplicity.

\item $\#(E)=6$. Now $3\leq d(E)\leq 6-2=4$. 

If $d(E)=4$, then as before $E^{(2)}=\{2a=a+a\}$ which has no multiplicity. 

If instead $d(E)=3$, then we have exactly two positive elements in $E$ that are not generators. Denote by $a_2$ the next smallest element of $E$. Notice that the smallest element in $E^{(2)}$ (as a set) is $a+a$ and the next smallest is $a+a_2$, furthermore, this second element is also multiplicity free. Indeed if $z+w=a+a_2$ with $w, z \in E_{> 0}$, and if both $z, w\geq a_2$, then $z+w\geq 2a_2 > a+a_2$ which is not the case. Hence one is $a$ and the other is $a_2$. We obtain that $E^{(2)}=\{a+a, a+a_2\}$, which has no multiplicity.
\end{itemize}

\end{proof}


As a further observation, with the same notation in the proof, notice that since $2a\in E$, then $a_2\leq 2a$ and so $a+a_2\leq 3a$, with equality iff $a_2=2a$. And so in the case that $a_2=b$ is a generator, $3a\notin E$ and $2b\notin E$. This shows how tight the set $E=\{0, 4, 6, 8, 10, 12, 13\}$ found is, since if $a=4, b=6$, then $3a=12$ and in fact $2b=12=3a$ does belong to $E$. We obtain that $\cal{R}$ is in a sense \emph{the} minimal non-thin subalgebra:

\begin{thm} The subalgebra $\cal{R}\subset \K[x]/x^{14}$ is a minimal non-thin subalgebra in the following ways: $n=14$ is minimal, $d(E)=3$ is minimal, $\#(E)=7$ is minimal and $\dim(\eme/\eme^2)=2$ is minimal.
\end{thm}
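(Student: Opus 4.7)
The plan is to verify each of the four minimality statements separately, checking that $\cal{R}$ realizes the claimed minimum and then citing the relevant earlier result for the lower bound. Each claim reduces to a one-line argument given the machinery already developed, so the proof is essentially a consolidation rather than a new computation.

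First I would address the minimality of $n=14$: this is immediate from Theorem 3 itself, which constructs $\cal{R}\subset \K[x]/x^{14}$ as non-thin, together with Theorem 2 establishing that every subalgebra of $\K[x]/x^n$ for $n\leq 10$ is thin, and the argument in Sections 5.1--5.4 handling the cases $n=11, 12, 13$ (where each maximal partial-monoid was checked to have property AU, hence property M by Proposition 6, so no non-thin subalgebra arises). Next, for $d(E)=3$ being minimal, I would appeal to Proposition 8 which handles the cases $d(E)=1, 2$; since $\cal{R}$ has $d(E)=3$ by the explicit computation in the proof of Theorem 3, this is sharp.

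For the minimality of $\dim(\eme/\eme^2)=2$, Proposition 9 directly gives the lower bound: if $\dim(\eme/\eme^2)=1$ then the algebra is thin. Combined with the explicit computation $\dim(\eme_\cal{R}/\eme_\cal{R}^2)=2$ for $\cal{R}$, this settles the claim. Finally, for $\#(E)=7$ being minimal, I would invoke Proposition 10, which shows that no non-thin subalgebra can have $\#(E)\leq 6$; the exponent set $E=\{0,4,6,8,10,12,13\}$ of $\cal{R}$ has exactly seven elements, again making the bound sharp.

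The only conceivable obstacle is checking that the four minimizations can all be realized simultaneously by the \emph{same} subalgebra $\cal{R}$, but this is precisely what the explicit construction in Theorem 3 and the remark preceding the statement about $\{0,4,6,8,10,12,13\}$ being tight provide. In other words, no separate extremal construction is needed for each invariant; $\cal{R}$ itself witnesses all four minima. Thus the proof is really a four-line corollary of Theorem 3 and Propositions 8, 9, 10 assembled in sequence.
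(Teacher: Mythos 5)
Your proposal is correct and matches the paper exactly: the paper offers no separate proof for this theorem, treating it precisely as you do --- a consolidation of the existence result for $n=14$ (with thinness already verified for all $n\leq 13$), the proposition ruling out $d(E)\leq 2$, the proposition ruling out $\dim(\eme/\eme^2)=1$, and the proposition ruling out $\#(E)\leq 6$, with the single algebra $\cal{R}$ witnessing all four minima simultaneously. The only caveat is that your proposition numbers are shifted by one relative to the paper's internal numbering, but the content you cite is the right content.
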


\Addresses

\end{document}